\definecolor {refcol}{RGB}{40,0,255}
\newfont{\footsc}{cmcsc10 at 8truept}
\newfont{\footbf}{cmbx10 at 8truept}
\newfont{\footrm}{cmr10 at 10truept}
\newtheorem{theorem}{Theorem}
\newtheorem{conjecture}[theorem]{Conjecture}
\newtheorem{problem}[theorem]{Problem}
\newtheorem{proposition}[theorem]{Proposition}
\newtheorem{question}[theorem]{Question}
\newenvironment{proof}[1][Proof]{\noindent{\textbf {#1}  }}  {\hfill$\Box$\bigskip}
\begin{document}

\title{\textbf{Remarks} \textbf{on the energy of regular graphs}}
\author{V. Nikiforov\thanks{Department of Mathematical Sciences, University of
Memphis, Memphis TN 38152, USA}}
\date{}
\maketitle

\begin{abstract}
The energy of a graph is the sum of the absolute values of the eigenvalues of
its adjacency matrix. This note is about the energy of regular graphs.  It is
shown that graphs that are close to regular can be made regular with a
negligible change of the energy. Also a $k$-regular graph can be extended to a
$k$-regular graph of a slightly larger order with almost the same energy. As
an application, it is shown that for every sufficiently large $n,$ there
exists a regular graph $G$ of order $n$ whose energy $\left\Vert G\right\Vert
_{\ast}$ satisfies
\[
\left\Vert G\right\Vert _{\ast}>\frac{1}{2}n^{3/2}-n^{13/10}.
\]
Several infinite families of graphs with maximal or submaximal energy are
given, and the energy of almost all regular graphs is
determined.\textit{\medskip}

\textbf{Keywords: }\textit{graph energy; regular graph; random regular graph;
degree deviation; maximal energy graphs.\medskip}

\textbf{AMS classification: }05C50

\end{abstract}

\section{Introduction}

In \cite{Gut78} Gutman introduced the energy of a graph as the sum of the
absolute values of the eigenvalues of its adjacency matrix. Since the energy
of a graph $G$ is the trace norm of its adjacency matrix, we write $\left\Vert
G\right\Vert _{\ast}$ for the energy of $G.$

In this note we discuss the energy of regular graphs, an area that has been
studied (see, e.g., \cite{GFPR07} and \cite{GLS12}), but which is still vastly unexplored.

In the groundbreaking paper \cite{KoMo01}, Koolen and Moulton showed that
\begin{equation}
\left\Vert G\right\Vert _{\ast}\leq k+\sqrt{k\left(  n-k\right)  \left(
n-1\right)  } \label{KM}%
\end{equation}
for every $k$-regular graph of order $n.$ Equality holds in (\ref{KM}) if and
only if $G=K_{n},$ or $G=\left(  n/2\right)  K_{2},$ or $G$ is a strongly
regular graph with parameters $\left(  n,k,a,a\right)  ,$ i.e., a $k$-regular
design graph (see, e.g.,\ \cite{BJK99}, p. 144). Since bound (\ref{KM}) is
precise for an amazing variety of graphs, in \cite{Bal04} Balakrishnan
proposed to study how tight this bound is in general, and asked the following
relevant question, which is quoted here verbatim:

\begin{question}
[Balakrishnan \cite{Bal04}]\label{bq}For any two positive integers $n$ and
$k$, $n-1>k\geq2$ and $\varepsilon>0$, does there exist a $k$-regular graph
$G$ with $\left\Vert G\right\Vert _{\ast}/B_{2}>1-\varepsilon,$ where
$B_{2}=k+\sqrt{k\left(  n-1\right)  \left(  n-k\right)  }.$
\end{question}

Unfortunately, despite its sound idea, Question \ref{bq} is incoherent in the
above form, because the quantifier of $\varepsilon$ is unclear, and if $n$ and
$k$ do not depend on $\varepsilon,$ the answer is almost always negative.
These weaknesses have been exploited in the literature to trivialize
Balakrishnan's question, e.g., in \cite{LLS10}. Moreover, important recent
results of van Dam, Haemers, and Koolen \cite{DHK14} imply that the original
question of Balakrishnan's cannot be preserved without excising many
combinations of $n$ and $k.$ To clarify this point, we state an essential
corollary of the paper \cite{DHK14}, which, however, has eluded its authors:

\begin{theorem}
\label{th1}Let $k\geq2$ and $n\geq k^{2}-k+1.$ If $G$ is a $k$-regular graph
of order $n,$ then
\begin{equation}
\left\Vert G\right\Vert _{\ast}\leq\left(  \sqrt{k-1}+\frac{1}{k+\sqrt{k-1}%
}\right)  n. \label{DHK}%
\end{equation}
Equality holds in (\ref{DHK}) if and only if $G$ is a disjoint union of
incidence graphs of projective planes of order $k-1$ or $k=2$ and $G$ is
disjoint union of triangles and hexagons.
\end{theorem}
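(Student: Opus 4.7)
Let $\lambda_1\ge\cdots\ge\lambda_n$ denote the eigenvalues of the adjacency matrix $A$ of $G$. Since $G$ is $k$-regular, $\lambda_1=k$, $|\lambda_i|\le k$ for every $i$, and $\sum_i\lambda_i^2=\operatorname{tr}(A^2)=nk$.

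My plan is to recast (\ref{DHK}) as a single spectral inequality and then extract that inequality from the analysis of \cite{DHK14}. Using $\sum_i\lambda_i^2=nk$, one checks by a line of algebra that (\ref{DHK}) is equivalent to
\[
\sum_{i=1}^{n}(|\lambda_i|-\sqrt{k-1})(k-|\lambda_i|)\le 0. \qquad(\ast)
\]
The summand $\varphi(x)=(x-\sqrt{k-1})(k-x)$ is an inverted parabola with roots $\sqrt{k-1}$ and $k$: non-positive on $[0,\sqrt{k-1}]\cup\{k\}$ and strictly positive on the ``middle range'' $(\sqrt{k-1},k)$. Thus $(\ast)$ asserts that the middle-range eigenvalues of $G$ cannot carry too much spectral mass relative to the small ones.

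To establish $(\ast)$ I would invoke the main theorem of \cite{DHK14}, which---under the Moore-type hypothesis $n\ge k^2-k+1$---constrains the eigenvalue distribution of a $k$-regular graph precisely enough to force $(\ast)$. Since the left-hand side of $(\ast)$ is additive over connected components, the bound for connected graphs implies the bound for arbitrary $k$-regular $G$ subject to the size hypothesis, and the two sides of (\ref{DHK}) scale in the same way. The crux of the work, and the main obstacle I foresee, is isolating inside \cite{DHK14} the precise quantitative statement that yields $(\ast)$ and verifying that it goes through under the clean hypothesis $n\ge k^2-k+1$ rather than any stronger assumption the original authors may have imposed.

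For the equality case, tightness in $(\ast)$ forces $|\lambda_i|\in\{\sqrt{k-1},k\}$ for every $i$. Writing $p$ for the number of eigenvalues equal to $\pm k$, the identity $\sum_i\lambda_i^2=nk$ combined with $p+(n-p)=n$ yields $n=p(k^2-k+1)$. A connected $k$-regular graph whose spectrum is contained in $\{\pm k,\pm\sqrt{k-1}\}$ is, for $k\ge 3$, an incidence graph of a projective plane of order $k-1$ (a bipartite $4$-eigenvalue graph with those parameters is necessarily a square $2$-design of that type); for $k=2$ one has $\sqrt{k-1}=1$ and the only connected examples are $C_3$ and $C_6$. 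Hence $G$ decomposes as claimed.
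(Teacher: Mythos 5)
First, a point of reference: the paper contains no proof of Theorem \ref{th1} at all --- it is stated as ``an essential corollary'' of \cite{DHK14} and the argument is delegated entirely to that reference. So your proposal must be judged on whether it is self-contained modulo a specific, identifiable external result, and it is not. Your reduction of (\ref{DHK}) to $(\ast)$ is correct: expanding the summand and using $\sum_i\lambda_i^2=nk$ turns $(\ast)$ into $(k+\sqrt{k-1})\left\Vert G\right\Vert_{\ast}\leq nk(1+\sqrt{k-1})$, which is (\ref{DHK}) because $\sqrt{k-1}+\tfrac{1}{k+\sqrt{k-1}}=\tfrac{k(1+\sqrt{k-1})}{k+\sqrt{k-1}}$. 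It is also the right reformulation, being equivalent to $\sum_i|\lambda_i|\leq\sum_i f(\lambda_i)$ for the quadratic $f(x)=\tfrac{x^2+k\sqrt{k-1}}{k+\sqrt{k-1}}$ that touches $|x|$ at $\pm\sqrt{k-1}$ and $\pm k$. But $(\ast)$ \emph{is} the theorem up to this trivial algebra, so ``invoke the main theorem of \cite{DHK14}'' is either circular (that main theorem is essentially this bound) or leaves the whole difficulty untouched: $f(x)<|x|$ precisely when $\sqrt{k-1}<|x|<k$, and nothing in your sketch explains why the positive middle-range terms of $(\ast)$ are outweighed by the negative ones. You candidly call this ``the crux of the work,'' which is an admission that the central step of the proof is absent. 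A related structural problem: you reduce to connected components, but a component of $G$ may have fewer than $k^2-k+1$ vertices even when $G$ satisfies the hypothesis, so you would need the connected case with no lower bound on the order --- at which point the stated hypothesis plays no role and should be explained away rather than leaned on.

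The equality analysis is also incomplete as argued. If $(\ast)$ is tight then indeed $|\lambda_i|\in\{\sqrt{k-1},k\}$ for all $i$ and $n=p(k^2-k+1)$, but a connected component with spectrum contained in $\{\pm k,\pm\sqrt{k-1}\}$ need not be bipartite: if $-k$ does not occur, the component has the three eigenvalues $k,\pm\sqrt{k-1}$ and is therefore a strongly regular graph with parameters $(k^2-k+1,k,1,1)$. Excluding these for $k\geq3$ requires Baer's theorem that every polarity of a finite projective plane has an absolute point; your parenthetical about bipartite four-eigenvalue graphs handles only the bipartite branch (which does give the incidence graphs of projective planes, and $C_6$ when $k=2$, while the non-bipartite branch gives $C_3$ when $k=2$). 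With that addition the characterization would be complete --- conditional, of course, on an actual proof of $(\ast)$.
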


Note that for $n>k^{2}-k+1$ the right side of (\ref{KM}) is greater than the
right side of (\ref{DHK}), so Theorem \ref{th1} is a clear improvement over
(\ref{KM}). Therefore, if we want to investigate $k$-regular graphs for which
(\ref{KM}) is almost tight, we must suppose that $k\geq\sqrt{n}$.

With this hindsight, it seems natural to split Question \ref{bq} into two
conjectures: First, study Question \ref{bq} for $k$-regular graphs whenever
$k$ grows not too fast with $n$, say, $k=o\left(  n\right)  .$ For this case
we venture the following conjecture:

\begin{conjecture}
\label{con0}For every $\varepsilon>0,$ there exist $\delta>0$ and
$k_{0}\left(  \varepsilon\right)  $ such that if $n\delta>k>k_{0}\left(
\varepsilon\right)  $ and $kn$ is even, there exists a $k$-regular graph $G$
of order $n$ with
\[
\left\Vert G\right\Vert _{\ast}\geq\left(  1-\varepsilon\right)  \sqrt{k}n.
\]

\end{conjecture}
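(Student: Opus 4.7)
The Cauchy--Schwarz inequality $\|G\|_\ast^2 \le n\sum_i\lambda_i^2 = kn^2$ gives $\|G\|_\ast \le \sqrt{k}\,n$, with equality in the limit only when the non-principal eigenvalues of $G$ cluster at $\pm\sqrt{k}$. This is precisely the spectral profile of a strongly regular graph with parameters $(m,k,a,a)$ and $a=o(k)$, i.e.\ a $k$-regular design graph, the same class that realises equality in~(\ref{KM}). The plan is to assemble $G$ from vertex-disjoint near-extremal blocks of this kind and then adjust the order to exactly $n$ using the regularization and extension lemmas announced in the introduction.

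Fix $\varepsilon' < \varepsilon/3$. For each sufficiently large $k$ the first task is to produce a $k$-regular building block $H_k$ on $m = m(k)$ vertices with $\|H_k\|_\ast \ge (1-\varepsilon')\sqrt{k}\,m$. When $k-1$ is a prime power $q$, the incidence graph of $\mathrm{PG}(2,q)$ is a bipartite $k$-regular graph on $m = 2(k^2-k+1)$ vertices whose spectrum is $\{\pm k\}\cup\{\pm\sqrt{k-1}\}^{(k^2-k)}$; its energy $2k + 2k(k-1)\sqrt{k-1}$ equals $(1-O(1/k))\sqrt{k}\,m$, so it qualifies. Generalized quadrangles, Paley graphs, and Hadamard designs supply further families with the same spectral concentration. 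For a general integer $k$ one picks a prime power $q$ with $|q+1-k|$ small, takes the corresponding design, and corrects the degree to $k$ by local spectral surgery, adding or deleting a low-regular subgraph; by Weyl's inequalities each eigenvalue then shifts by at most $|q+1-k|$, which is admissible provided prime-power gaps near $k$ are $o(\sqrt{k})$.

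Choose $\delta = \delta(\varepsilon)$ small enough that $n \gg m$ whenever $k < \delta n$, and let $G_0$ be the disjoint union of $t = \lfloor n/m \rfloor$ copies of $H_k$. The spectrum of $G_0$ is the $t$-fold multiset union of the spectrum of $H_k$, so $G_0$ is $k$-regular of order $tm$ and $\|G_0\|_\ast = t\,\|H_k\|_\ast \ge (1-\varepsilon')\sqrt{k}\,tm$. The leftover $n-tm < m$ vertices are absorbed by the extension lemma of the paper, yielding a $k$-regular graph $G$ of order exactly $n$ whose energy differs from $\|G_0\|_\ast$ by at most $O(\sqrt{k}\,m) = o(\sqrt{k}\,n)$. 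Taking $k_0(\varepsilon)$ large and $\delta(\varepsilon)$ small forces every lower-order term below $\varepsilon'\sqrt{k}\,n$, giving $\|G\|_\ast \ge (1-\varepsilon)\sqrt{k}\,n$; the parity requirement ``$kn$ even'' merely ensures that a $k$-regular graph of order $n$ exists in the first place.

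The main obstacle is the uniform production of the block $H_k$ for \emph{every} large $k$. For prime-power values of $k-1$ the algebraic constructions above are immediate, but for arbitrary $k$ one needs either sufficiently strong bounds on gaps between prime powers (so that the spectral surgery moves each eigenvalue by only $o(\sqrt{k})$) or a direct combinatorial construction of SRG$(m,k,a,a)$-like graphs with $a=o(k)$ for each $k$. The latter is a long-standing open problem in algebraic graph theory, which is why the statement above is posed as a conjecture rather than as a theorem; any progress on it would quickly yield Conjecture~\ref{con0} in full.
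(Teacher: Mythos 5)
The statement you are proving is Conjecture~\ref{con0}; the paper offers no proof of it, only circumstantial evidence (the Ahrens--Szekeres design graphs, which settle the case $k\approx n^{2/3}$, and the random regular graphs of Section~\ref{ERR}, which reach only $\frac{8}{3\pi}\sqrt{k}\,n\approx 0.849\sqrt{k}\,n$). So there is no proof to compare yours against, and your proposal --- as you yourself concede in the last paragraph --- does not close the conjecture either. Still, it is worth pinpointing exactly where it fails, because the failure is not quite where you locate it. The block $H_k$ built from the incidence graph of $\mathrm{PG}(2,q)$ has order $m=2(k^2-k+1)=\Theta(k^2)$, so the disjoint-union-of-blocks strategy requires $k=O(\sqrt{n})$; it cannot reach the range $\sqrt{n}\ll k<\delta n$ that the conjecture demands, since $\delta$ must be a constant independent of $n$ and shrinking $\delta$ never forces $k^2\leq n$. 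In that intermediate range one would need $k$-regular blocks of order between roughly $k/(2\varepsilon)$ and $n$ whose energy per vertex is $(1-\varepsilon)\sqrt{k}$, i.e.\ design-like graphs with parameters $(m,k,a,a)$ and $a=o(k)$ at these orders; their existence for general $k$ is precisely the open problem the conjecture encodes, and neither Paley graphs (which are dense, $k=n/2$, and attain only $\sqrt{k(n-k)n}=n^{3/2}/2\approx 0.707\sqrt{k}\,n$) nor Hadamard designs fill this gap.

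Two smaller points. First, your ``spectral surgery'' step is stated too pessimistically and too optimistically at once: bounding the energy change by (number of eigenvalues)$\times$(Weyl shift) indeed needs prime gaps $o(\sqrt{k})$, which is far beyond what is known ($O(k^{0.525})$ by Baker--Harman--Pintz); but the paper's Proposition~\ref{D} bounds the change by $m\sqrt{d}$ when a $d$-regular correction graph is added, which only needs $d=o(k)$ --- so this particular sub-gap is repairable and is not the real obstruction. Second, the absorption of the leftover $n-tm$ vertices via Theorem~\ref{th3} costs $3\sqrt{(n-tm)kn}\leq 3\sqrt{mkn}$, and requiring this to be at most $\varepsilon'\sqrt{k}\,n$ again forces $m=O(\varepsilon'^2 n)$, i.e.\ $k=O(\sqrt{n})$ --- the same barrier. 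In short, your argument is a plausible and correctly assembled proof of the conjecture restricted to $k_0(\varepsilon)<k\leq c\sqrt{\varepsilon n}$ (modulo the surgery fix above), but not of the statement as posed, and you should present it as such rather than as a plan for the full conjecture.
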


Second, study Question \ref{bq} for dense regular graphs, for which the
following conjecture might hold:

\begin{conjecture}
\label{mcon}For every $\varepsilon>0$ and $n$ sufficiently large, if
$n>k>\sqrt{n}$ and $kn$ is even, there exists a $k$-regular graph $G$ of order
$n$ with
\begin{equation}
\left\Vert G\right\Vert _{\ast}\geq\left(  1-\varepsilon\right)
\sqrt{k\left(  n-k\right)  n}. \label{mineq}%
\end{equation}

\end{conjecture}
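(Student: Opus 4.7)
The plan is to reduce the conjecture to the construction of $k$-regular graphs on approximately $n$ vertices whose non-principal eigenvalues are sharply concentrated, and then to transfer such graphs to exactly $n$ vertices using the regularization and extension machinery announced in the abstract.

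For any $k$-regular graph $G$ of order $n$ with adjacency spectrum $k = \lambda_1 \geq \lambda_2 \geq \cdots \geq \lambda_n$, one has $\sum_{i \geq 2}\lambda_i^2 = \operatorname{tr}(A^2) - k^2 = k(n-k)$, so by Cauchy--Schwarz $\sum_{i \geq 2}|\lambda_i| \leq \sqrt{(n-1)k(n-k)}$ with equality iff all $|\lambda_i|$ ($i \geq 2$) are equal; this is essentially the Koolen--Moulton bound (\ref{KM}) and is attained by the $(n,k,a,a)$-design graphs. The target $\sqrt{k(n-k)n}$ differs from this upper bound by lower-order terms, so the task is to exhibit, for each admissible $(n,k)$, a $k$-regular graph whose non-principal spectrum is $(1+o(1))$-concentrated at the single value $\sqrt{k(n-k)/(n-1)}$.

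The proposed approach has two layers. First, a \emph{base layer} of algebraic templates: the Paley graph on a prime power $q \equiv 1 \pmod 4$ is $(q-1)/2$-regular with non-principal eigenvalues $(-1 \pm \sqrt{q})/2$, handling $k \approx n/2$ and yielding the special case stated in the abstract; more generally one would use Cayley graphs on $\mathbb{F}_q$ whose connection set is a union of cosets of a subgroup of $\mathbb{F}_q^\times$, and strongly regular graphs arising from symmetric $2$-designs, generalized quadrangles, and conference matrices, whose eigenvalues are controlled by Gauss/Weil-type character sums. Second, a \emph{transfer layer}: given a target $(n,k)$, locate a nearby base graph $G_0$ of order $n_0$ and degree $k_0$ with $|n-n_0|$ and $|k-k_0|$ small, then invoke the paper's regularization lemma to make it exactly regular and the extension lemma to adjust the order to $n$, verifying that the total energy change is $o(\sqrt{k(n-k)n})$.

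The main obstacle is the intermediate regime $\sqrt{n} \ll k \ll n$. There the target eigenvalue magnitude $\sqrt{k(n-k)/(n-1)}$ is asymptotic to $\sqrt{k}$, whereas a generic pseudorandom $k$-regular graph (random regular graph, or typical Cayley graph) has non-principal eigenvalues distributed up to $\approx 2\sqrt{k-1}$ by Alon--Boppana/Friedman, losing a constant factor in energy. Thus one cannot rely on off-the-shelf pseudorandomness, and a new construction, most likely from a combinatorial design with prescribed single-magnitude second eigenvalue (partial geometries, strongly regular graphs from association schemes, or algebraically constructed distance-regular graphs) is required, together with a density argument showing that such templates lie within a negligibly perturbed order and degree of every $(n,k)$ in the relevant range. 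This is the step at which Conjecture \ref{mcon} appears to require genuinely new input beyond the techniques of the present paper.
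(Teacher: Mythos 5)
The statement you were asked to prove is labelled as a conjecture, and the paper contains no proof of it: the author only remarks that it would follow from the (also open) Conjecture \ref{mcon1}, and offers partial evidence --- equality in (\ref{KM}) for design graphs such as the symplectic graphs $Sp(2m,q)$ at special densities (Section \ref{ESR}), the Paley-graph construction behind Theorem \ref{th2} for $k\approx n/2$, and the fact that almost all $k$-regular graphs attain an $8/(3\pi)\approx0.85$ fraction of the target (Theorems \ref{th4} and \ref{th5}). Your proposal is therefore not competing with an actual proof; judged as a research programme it is sound and in fact mirrors the paper's own strategy: algebraic templates with two-valued non-principal spectrum as a base layer, plus the regularization (Theorem R, Proposition \ref{pro4}) and order-extension (Theorem \ref{th3}) machinery as a transfer layer. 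Your Cauchy--Schwarz rederivation of (\ref{KM}), your identification of design graphs as the equality cases, and your observation that generic pseudorandomness cannot work (the semicircle law caps random regular graphs at $8/(3\pi)$ of the bound) are all correct and correctly locate the difficulty.

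The genuine gap is the one you name yourself, and it is fatal to the proposal as a proof: for a general pair $(n,k)$ with $\sqrt{n}<k<n$ no family of $k$-regular graphs is known whose non-principal eigenvalues are $(1+o(1))$-concentrated at a single magnitude, and the known strongly regular templates exist only at a sparse set of densities ($1/5$, $1/4$, $1/3$, $1/2$, $2/3$, \dots, as listed after Proposition \ref{pro3}); the paper even poses density $1/6$ as an open question. The transfer layer cannot bridge arbitrary gaps either: Theorem \ref{th3} changes the energy by $O(\sqrt{(n-t)kn})$, which is $o(\sqrt{k(n-k)n})$ only when $n-t=o(n-k)$, so for dense $k$ the template's order must already lie within $o(n-k)$ of $n$ --- a strong density requirement on the family of base graphs that is not known to hold. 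Your closing assessment is accurate: the statement remains a conjecture, and neither the paper nor your proposal proves it.
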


As it turns out that the important factor in Conjecture \ref{mcon} is not the
degree of $G$, but its density. Thus, it turns out that Conjecture \ref{mcon}
follows from the following simpler one:

\begin{conjecture}
\label{mcon1}Let $0<c\leq1/2.$ For every $\varepsilon>0,$ if $n$ is
sufficiently large. there exists a graph $G$ of order $n$ with at most
$cn^{2}/2$ edges such that
\begin{equation}
\left\Vert G\right\Vert _{\ast}\geq\left(  1-\varepsilon\right)
\sqrt{c\left(  1-c\right)  }n^{3/2}. \label{mineq1}%
\end{equation}

\end{conjecture}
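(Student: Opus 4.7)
The target $(1-\varepsilon)\sqrt{c(1-c)}\,n^{3/2}$ coincides with the Cauchy--Schwarz upper bound on $\sum_{i\ge 2}|\lambda_i(G)|$ subject to $\sum_i \lambda_i^2 = 2|E(G)| \le cn^2$ and $\lambda_1\approx cn$, and equality in it would force every non-principal eigenvalue to have absolute value close to $\sqrt{c(1-c)n}$. So my plan is to construct, for each density $c\in(0,1/2]$, a graph of order $n$ with at most $cn^2/2$ edges whose non-Perron spectrum is essentially concentrated at $\pm\sqrt{c(1-c)n}$.

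For the endpoint $c=1/2$ I would take the Paley graph $P_q$ with $q$ a prime power, $q\equiv 1\pmod 4$: $P_q$ is $(q-1)/2$-regular, and all non-principal eigenvalues equal $(-1\pm\sqrt q)/2$, so $\|P_q\|_\ast=\tfrac12 q^{3/2}+O(q)$, which already matches the target. The paper's two structural results---a nearly regular graph can be made regular with negligible change of the energy, and a $k$-regular graph can be extended to a slightly larger $k$-regular graph with almost the same energy---then allow one to interpolate from prime-power orders $q$ to every sufficiently large $n$. This appears to be the route followed to the paper's main $\tfrac12 n^{3/2}-n^{13/10}$ estimate, which is the $c=1/2$ case of the conjecture.

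For general $c\in(0,1/2)$ I would replace $P_q$ by a cyclotomic Cayley graph: fix integers $m,j$ with $j/m$ close to $c$, and take $\mathrm{Cay}(\mathbb{F}_q,S)$ where $S$ is a union of $j$ nontrivial cosets of the $m$-th power residues in $\mathbb{F}_q^{\ast}$. These graphs have density $\approx j/m$ and non-principal eigenvalues expressible via Gauss sums of modulus $O(\sqrt q)$, after which one again invokes the regularization and extension tools to pass to arbitrary $n$. The principal obstacle is that the standard cyclotomic bounds only yield $|\lambda_i|\le C\sqrt q$ for an unspecified constant $C$, while the conjecture demands the sharp two-sided concentration $|\lambda_i|=(1+o(1))\sqrt{c(1-c)n}$ for \emph{almost all} non-principal eigenvalues. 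Forcing this two-sided concentration is the heart of any proof; a plausible strategy is to combine the cyclotomic upper bound with a $\mathrm{tr}(A^4)$ computation to control the variance of $\lambda_i^2$ and thereby saturate Cauchy--Schwarz. That such concentration is presently available for $c=1/2$ via strongly regular design graphs but not for general $c$ is, I suspect, precisely why the statement is recorded here as a conjecture rather than as a theorem.
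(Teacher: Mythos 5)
You have correctly diagnosed the essential point: this statement is an open conjecture in the paper, not a theorem, and the paper supplies no proof of it. What the paper does provide is partial evidence, and your sketch matches it where it exists. The case $c=1/2$ is exactly Theorem \ref{th2}: Paley graphs, made regular and extended to arbitrary order via Theorem \ref{th3} and Proposition \ref{pro4}, as you say. Section \ref{ESR} settles the densities $c=(q-1)/q$ and $c=1/q$ for prime powers $q$ using the symplectic design graphs $Sp(2m,q)$ and their complements (Proposition \ref{pro3}), and the paper explicitly flags $c=1/6$ as the first open density. Your reformulation --- that saturating the Cauchy--Schwarz step forces almost all non-principal eigenvalues to have modulus close to $\sqrt{c(1-c)n}$, i.e.\ forces a design-graph-like spectrum --- is the correct way to see why the problem is hard.

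The gap you acknowledge for general $c$ is, however, not a missing technical estimate but an obstruction to the specific family you propose. For $\mathrm{Cay}(\mathbb{F}_q,S)$ with $S$ a union of $j$ cosets of the $m$-th power residues, the nontrivial eigenvalues are sums of Gaussian periods, and these all have the same modulus only in the semi-primitive (``uniform cyclotomy'') cases; already for $m=3$, $j=1$ the three periods are roots of a cubic governed by the representation $4q=L^2+27M^2$ and are genuinely of different sizes, so $\sum_{i\ge 2}\lvert\lambda_i\rvert$ falls short of the Cauchy--Schwarz bound by a constant factor. The proposed $\mathrm{tr}(A^4)$ computation cannot repair this: $\mathrm{tr}(A^4)$ is (up to a factor $q$) the additive energy of $S$, and for unions of cyclotomic cosets that energy exceeds the minimum compatible with $\sum_{i\ge 2}\lambda_i^2=c(1-c)q^2(1+o(1))$ by a constant factor precisely when the periods are unequal in modulus --- the fourth moment detects the failure of concentration, it does not enforce it. So your plan would yield a bound $\gamma(c)\,n^{3/2}$ with $\gamma(c)<\sqrt{c(1-c)}$ for most $c$, which is strictly weaker than the conjectured inequality. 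A genuine proof must produce (near-)design graphs at every density $c$, and no such construction is known; that is exactly why the statement stands in the paper as a conjecture.
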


First, note that in Conjecture \ref{mcon1} the requirement $0<c\leq1/2$ is not
restrictive and is just a convenience, for the energy of a graph is roughly
the same as the energy of its complement, as shown in Section \ref{EC} below.

However, the crucial implication here is that if a graph $G$ of order $n$ has
at most $cn^{2}/2$ edges and satisfies (\ref{mineq1}), then it must be close
to regular, and such graphs can be made regular with a negligible loss of
energy, easily concealed by the $\left(  1-\varepsilon\right)  $ coefficient
in (\ref{mineq}).

As another illustration of these ideas, we shall improve a result on graphs of
maximal energy: Recall that in \cite{Nik07j}, a question of Koolen and Moulton
\cite{KoMo01} was answered by showing that if $n$ is sufficiently large, there
exists a graph $G$ of order $n$ with
\[
\left\Vert G\right\Vert _{\ast}>\frac{1}{2}n^{3/2}-n^{11/10}.
\]

In Section, \ref{DD} we show that $G$ can be chosen regular at a negligible
loss of energy:

\begin{theorem}
\label{th2}If $n$ is sufficiently large, there exists a regular graph $G$ of
order $n$ with%
\[
\left\Vert G\right\Vert _{\ast}>\frac{1}{2}n^{3/2}-n^{13/10}.
\]

\end{theorem}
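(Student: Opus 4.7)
The plan is to combine the near-extremal construction of \cite{Nik07j} with the two regularization tools announced in the abstract and developed in Section~\ref{DD}.

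First I would invoke \cite{Nik07j} to produce, for each sufficiently large $n$, a (generally non-regular) graph $G_0$ of order $n$ with $\|G_0\|_* > \tfrac{1}{2}n^{3/2} - n^{11/10}$. Because this value lies so close to the trivial upper bound $\tfrac{1}{2}n^{3/2}$, $G_0$ must be essentially regular: its degree deviation $\sigma(G_0) := \sum_v \bigl|d(v) - \bar d\bigr|$ is small. I would derive this either by direct inspection of the construction of \cite{Nik07j} (which glues a regular design graph on $n_0 \le n$ vertices to a small correction reaching order $n$) or by a general stability statement translating an energy deficit of $\varepsilon$ into a degree-deviation bound.

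Next I would apply the first Section~\ref{DD} tool -- ``graphs close to regular can be made regular with a negligible change of energy'' -- to $G_0$, obtaining a $k$-regular graph $G_1$ of some order $n' \approx n$ with $\|G_1\|_* \geq \|G_0\|_* - o(n^{13/10})$. The companion tool -- ``a $k$-regular graph can be extended to a $k$-regular graph of a slightly larger order with almost the same energy'' -- then promotes $G_1$ to a regular graph of order exactly $n$ at negligible further cost, the parity condition $kn$ even being arranged by a small adjustment of $k$. Summing the three contributions (the $n^{11/10}$ from \cite{Nik07j}, the regularization loss, and the extension loss) gives the required bound.

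The main obstacle will be the energy accounting in the regularization step. A Hoffman--Wielandt style inequality gives $\bigl|\|A\|_* - \|B\|_*\bigr| \leq \sqrt{n}\,\|A-B\|_F$, so flipping $m$ edges perturbs the energy by at most $O(\sqrt{mn})$; for this to fit inside the budget $n^{13/10} - n^{11/10}$ one needs $m = O(n^{8/5})$. Showing that the \cite{Nik07j} graph $G_0$ can be made regular by flipping so few edges, while simultaneously adjusting the order back to exactly $n$, is the structural heart of the argument and is exactly the content of the Section~\ref{DD} tools; the exponent $13/10$ arises as the natural balance between the $n^{11/10}$ starting deficit and this $\sqrt{mn}$ perturbation cost.
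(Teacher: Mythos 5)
Your high-level strategy coincides with the paper's: the near-extremal graphs of \cite{Nik07j} come from Paley graphs, the regularization cost is $O(\sqrt{mn})$ for $m$ flipped edges, and the exponent $13/10$ does arise from balancing that cost against the starting deficit. Your quantitative target ($m=O(n^{8/5})$ flips allowed) is also correct. The gap is in how you certify that so few flips suffice. The fallback you offer --- ``a general stability statement translating an energy deficit into a degree-deviation bound'' --- does not deliver enough. The only such statement available (Theorem 1 of \cite{Nik06} combined with the Koolen--Moulton argument) gives roughly $\lambda_{1}(G)-2m/n\geq s^{2}(G)/(2n^{2}\sqrt{2m})$, and an energy deficit of $n^{11/10}$ translates into $\lambda_{1}-2m/n=O(n^{3/5})$, hence only $s(G_{0})=O(n^{9/5})$; the resulting regularization cost $\sqrt{2s(G_{0})n}=\Omega(n^{7/5})$ overshoots the $n^{13/10}$ budget. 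So you are forced onto your first option, ``direct inspection of the construction of \cite{Nik07j}'' --- at which point you are no longer using the order-$n$ graph of \cite{Nik07j} as a black box, but rather its ingredient: a Paley graph $G_{p}$ of prime order $p\equiv1\ (\mathrm{mod}\ 4)$ with $n-p=O(n^{3/5})$ (Baker--Harman--Pintz), with $\left\Vert G_{p}\right\Vert_{\ast}>p^{3/2}/2$. That is exactly where the paper starts.

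From there the paper is also more economical than your plan: there is no intermediate graph $G_{1}$ of order $n'\approx n$ and no ``small adjustment of $k$'' to fix parity (an operation you cannot perform on a given regular graph without changing it substantially). Since $p\equiv1\ (\mathrm{mod}\ 4)$, the degree $k=(p-1)/2$ of $G_{p}$ is even, so $kn$ is automatically even for the target order $n$, and Theorem \ref{th3} applies in one step: it pads $G_{p}$ with $n-p$ new vertices of degree about $k/2$ (so the average degree becomes exactly $k$, which is what makes Theorem R output a genuinely $k$-regular graph) and then regularizes, at total cost $3\sqrt{(n-p)kn}\leq\tfrac{3}{4}n^{13/10}$. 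To turn your proposal into a proof you should drop the stability fallback, make the choice of $p$ and the parity of $k$ explicit, and run the padding-plus-regularization step on $G_{p}$ itself rather than on the order-$n$ graph of \cite{Nik07j}.
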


The remaining part of this note covers the following topics: In Section
\ref{DD}, we establish relations between the energy and the degree deviation
of a graph, from which we deduce Theorem \ref{th2}. Section \ref{EC} is
dedicated to inequalities between the energy of a graph and the energy of its
complement. In Section \ref{ESR}, we use some design graphs to show the
tightness of bound (\ref{KM}) for infinite classes of graphs. Finally, in
Section \ref{ERR}, we determine the energy of almost all regular graphs, which
so far seems to have gone unnoticed.

\section{\label{DD}The energy of graphs and degree deviation}

The principal goal of this section is to show that graphs that are close to
regular can be made regular with just a minor change of the energy. Another
goal is to show that a $k$-regular graph can be extended to a $k$-regular
graph of a slightly larger order with almost the same energy. These two
results can be used in various graph constructions related to the energy of
regular graphs.

Let $G$ be a graph of order $n$ and size $m.$ In \cite{Nik06}, the author
suggested to use the function%
\[
s\left(  G\right)  =\sum_{u\in V\left(  G\right)  }\left\vert d\left(
u\right)  -\frac{2m}{n}\right\vert
\]
as a measure of irregularity of $G.$ Here $d\left(  u\right)  $ stands for the
degree of the vertex $u.$ Clearly, $G$ is regular if and only if $s\left(
G\right)  =0,$ so we say that $G$ is close to regular if $s\left(  G\right)
=o\left(  n^{2}\right)  .$

The following theorem was proved in \cite{Nik06}:\medskip\ 

\textbf{Theorem R }\emph{For every graph }$G$\emph{ of order }$n$\emph{ and
size }$m,$\emph{ there exists a graph }$R$\emph{ of order }$n$\emph{ and size
}$m$\emph{ such that }$\Delta\left(  R\right)  \leq\delta\left(  R\right)
+1$\emph{ and }$R$\emph{ differs from }$G$\emph{ in at most }$s\left(
G\right)  $\emph{ edges. In particular, if }$2m/n$\emph{ is an integer then
}$R$\emph{ is }$\left(  2m/n\right)  $\emph{-regular.\medskip}

Applications of Theorem R often use the following simple bound:

\begin{proposition}
\label{D}Let $H$ and $G$ be two graphs on the same vertex set. If $H$ differs
from $G$ in at most $m$ edges, then
\[
\left\vert \left\Vert H\right\Vert _{\ast}-\left\Vert G\right\Vert _{\ast
}\right\vert \leq\sqrt{2mn}.
\]

\end{proposition}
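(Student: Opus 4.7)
The plan is to apply the triangle inequality for the trace norm to the difference of the adjacency matrices, and then bound the trace norm of this difference via the Cauchy--Schwarz inequality relating it to the Frobenius norm.

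First I would write $A = A_H - A_G$, where $A_H$ and $A_G$ are the adjacency matrices of $H$ and $G$. Since the trace norm $\|\cdot\|_*$ is a genuine norm on matrices, the reverse triangle inequality gives
\[
\bigl|\,\|H\|_{*} - \|G\|_{*}\,\bigr| \;=\; \bigl|\,\|A_H\|_{*} - \|A_G\|_{*}\,\bigr| \;\le\; \|A\|_{*}.
\]
Thus everything reduces to bounding $\|A\|_{*}$.

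Next I would count: each of the at most $m$ edges in the symmetric difference contributes exactly two nonzero entries (each equal to $\pm 1$) to the symmetric matrix $A$, so $\|A\|_{F}^{2} \le 2m$. Since $A$ is a symmetric $n\times n$ matrix, its rank is at most $n$, so at most $n$ of its eigenvalues $\lambda_{1},\dots,\lambda_{n}$ are nonzero. Cauchy--Schwarz then yields
\[
\|A\|_{*} \;=\; \sum_{i=1}^{n} |\lambda_{i}| \;\le\; \sqrt{n}\,\sqrt{\sum_{i=1}^{n}\lambda_{i}^{2}} \;=\; \sqrt{n}\,\|A\|_{F} \;\le\; \sqrt{n}\cdot\sqrt{2m} \;=\; \sqrt{2mn}.
\]
Combining the two displays gives the claimed inequality.

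There is no real obstacle here: the whole argument is a two-line application of norm inequalities. The only thing to be careful about is the factor of $2$ in $\|A\|_{F}^{2} \le 2m$, which comes from symmetry (each differing edge corresponds to two off-diagonal $\pm 1$ entries), and this is exactly what produces the constant $\sqrt{2mn}$ on the right-hand side rather than $\sqrt{mn}$.
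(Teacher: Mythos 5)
Your proof is correct and follows essentially the same route as the paper: both reduce to bounding $\left\Vert A(H)-A(G)\right\Vert _{\ast}$ via the triangle inequality for the trace norm, note that this difference matrix has at most $2m$ entries of modulus one, and apply Cauchy--Schwarz to get $\left\Vert A(H)-A(G)\right\Vert _{\ast}\leq\sqrt{n\sum\lambda_{i}^{2}}\leq\sqrt{2mn}$. The only difference is that you spell out the reverse triangle inequality and the Frobenius-norm bookkeeping that the paper leaves implicit.
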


\begin{proof}
Indeed, write $A\left(  G\right)  $ and $A\left(  H\right)  $ for the
adjacency matrices of $G$ and $H.$ Clearly the matrix $A\left(  G\right)
-A\left(  H\right)  $ has at most $2m$ entries of modulus one. Hence,
\[
\left\Vert A\left(  G\right)  -A\left(  H\right)  \right\Vert _{\ast}\leq
\sqrt{n\sum\lambda_{i}^{2}\left(  A\left(  G\right)  -A\left(  H\right)
\right)  }=\sqrt{2mn},
\]
as claimed.
\end{proof}

Theorem R, Proposition \ref{D}, and the triangle inequality for the trace norm
help to construct regular graphs from irregular ones keeping control on the
energy change:

\begin{proposition}
\label{pro4}Let $n>k\geq1$ and $nk$ be even. If $G$ is a graph of order $n$
with $kn/2$ edges, then there exists a $k$-regular graph $R$ of order $n$ such
that
\[
\left\vert \left\Vert R\right\Vert _{\ast}-\left\Vert G\right\Vert _{\ast
}\right\vert \leq\sqrt{2s\left(  G\right)  n}%
\]

\end{proposition}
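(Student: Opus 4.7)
The plan is to chain together the two tools just established in direct succession. The essential observation is that the hypothesis $nk$ even makes $kn/2$ a legitimate edge count, and under the assumption that $G$ has exactly $m=kn/2$ edges, the average degree $2m/n=k$ is an integer. This is precisely the ``particular case'' of Theorem R, so we obtain a graph $R$ of order $n$ and size $kn/2$ which is $k$-regular and which differs from $G$ in at most $s(G)$ edges.

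Next, I would feed $R$ and $G$ into Proposition \ref{D}, with the ``$m$'' appearing there instantiated to $s(G)$. Proposition \ref{D} then outputs $|\|R\|_{\ast} - \|G\|_{\ast}| \leq \sqrt{2\,s(G)\,n}$, which is exactly the inequality asserted by Proposition \ref{pro4}.

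No genuine obstacle arises here; the proposition is in essence a convenience repackaging of Theorem R and Proposition \ref{D} so that later sections can invoke ``regularization at a controlled energy cost'' as a single step. The only items worth verifying explicitly are (i) the parity condition, which ensures the edge-count $kn/2$ is an integer and hence that Theorem R's integrality hypothesis is met, and (ii) that $R$ and $G$ share a common vertex set of size $n$, which is also built into Theorem R. Both are immediate from the statement.
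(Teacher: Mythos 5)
Your proposal is correct and matches the paper's intended argument exactly: the paper presents Proposition \ref{pro4} as an immediate consequence of Theorem R (using that $2m/n=k$ is an integer) followed by Proposition \ref{D} applied with $m=s(G)$. Nothing further is needed.
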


Here we outline a concrete application of this proposition, which is one of
the main results of this section:

\begin{theorem}
\label{th3}Let $n>t>k\geq2$ and suppose that $nk$ is even. If $H$ is a
$k$-regular graph of order $t,$ there exists a $k$-regular graph $G$ of order
$n$ such that
\[
\left\vert \left\Vert H\right\Vert _{\ast}-\left\Vert G\right\Vert _{\ast
}\right\vert <3\sqrt{\left(  n-t\right)  kn}%
\]

\end{theorem}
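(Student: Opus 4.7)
My plan is to construct an intermediate graph $G_{0}$ of order $n$ and size $kn/2$ that contains $H$ as a subgraph, then apply Proposition~\ref{pro4} to obtain a $k$-regular graph $G$ of order $n$ whose energy is close to that of $G_{0}$. The distance from $H$ to $G_{0}$ will be controlled by Proposition~\ref{D}, and the triangle inequality will combine the two estimates; this is exactly the kind of two-step recipe that Proposition~\ref{pro4} is designed for.

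To build $G_{0}$, identify $V(H)$ with the first $t$ vertices, introduce $n-t$ new vertices, and add $k(n-t)/2$ further edges placed arbitrarily (avoiding multi-edges). This is well defined: $k(n-t)=kn-kt$ is even because $kn$ is even by hypothesis and $kt=2|E(H)|$ is even, and the insertion is feasible since $kn/2\leq\binom{n}{2}$ whenever $k\leq n-1$. For $u\in V(H)$ write $d_{G_{0}}(u)=k+e_{u}$ with $e_{u}\geq 0$, and for $u\notin V(H)$ write $d_{G_{0}}(u)=f_{u}\geq 0$. Since $\sum_{u}e_{u}+\sum_{u}f_{u}=2\cdot k(n-t)/2=k(n-t)$, the elementary bound $|f_{u}-k|\leq f_{u}+k$ gives
\[
s(G_{0})=\sum_{u\in V(H)}e_{u}+\sum_{u\notin V(H)}|f_{u}-k|\leq\sum_{u}e_{u}+\sum_{u}f_{u}+k(n-t)=2k(n-t),
\]
and crucially this holds no matter how the $k(n-t)/2$ extra edges are placed.

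Now Proposition~\ref{pro4} applied to $G_{0}$ yields a $k$-regular graph $G$ of order $n$ with $\bigl|\left\Vert G\right\Vert _{\ast}-\left\Vert G_{0}\right\Vert _{\ast}\bigr|\leq\sqrt{2s(G_{0})n}\leq 2\sqrt{(n-t)kn}$. Viewing $H$ on the same $n$ vertices (padding by isolated vertices preserves the energy), $H$ and $G_{0}$ differ in exactly $k(n-t)/2$ edges, so Proposition~\ref{D} gives $\bigl|\left\Vert H\right\Vert _{\ast}-\left\Vert G_{0}\right\Vert _{\ast}\bigr|\leq\sqrt{(n-t)kn}$. The triangle inequality then yields the bound $3\sqrt{(n-t)kn}$, and strict inequality is obtained by arranging that some added edge is incident to a new vertex, so that $|f_{u}-k|<f_{u}+k$ strictly for at least one $u$.

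The main obstacle I expect is recognising the universal bound $s(G_{0})\leq 2k(n-t)$: one is tempted to split into cases $n-t\geq k+1$ versus $n-t\leq k$ and construct a carefully tailored $G_{0}$ in each, but the crude inequality $|f_{u}-k|\leq f_{u}+k$ makes all such effort unnecessary---any graph with the right vertex count and edge count works.
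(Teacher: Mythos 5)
Your proof is correct and follows essentially the same route as the paper: pad $H$ with isolated vertices, add $k(n-t)/2$ edges to reach size $kn/2$, bound $s(G_{0})$, and combine Proposition~\ref{D} with Proposition~\ref{pro4} via the triangle inequality. The only difference is that the paper joins each new vertex to roughly $k/2$ old vertices, which gives the sharper value $s(G_{0})=(n-t)k$ and hence the strict constant $1+\sqrt{2}<3$ for free, whereas your arbitrary placement yields only $s(G_{0})\leq 2k(n-t)$ and forces the (valid) extra strictness argument at the end.
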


\begin{proof}
Write $H_{0}$ for the graph of order $n$ obtained by adding $n-t$ isolated
vertices to $H$ and note that $\left\Vert H_{0}\right\Vert _{\ast}=\left\Vert
H\right\Vert _{\ast}.$ Let $G_{0}$ be the graph of order $n$ obtained by
joining every new vertex of $H_{0\text{ }}$to $\left\lfloor k/2\right\rfloor $
or $\left\lceil k/2\right\rceil $ of the vertices of $H$ so that $G_{0}$ has
$nk/2$ edges. It is not hard to see that
\begin{equation}
s\left(  G_{0}\right)  =\left(  n-t\right)  k. \label{in3}%
\end{equation}
Since $G_{0}$ has $\left(  n-t\right)  k/2$ edges in addition to those of $H,$
Proposition \ref{D} implies that
\begin{equation}
\left\vert \left\Vert G_{0}\right\Vert _{\ast}-\left\Vert H_{0}\right\Vert
\right\vert \leq\sqrt{\left(  n-t\right)  kn}. \label{in4}%
\end{equation}
On the other hand, Theorem R implies that there exists a $k$-regular graph $G$
of order $n$ such that $G$ differs from $G_{0}$ in at most $s\left(
G_{0}\right)  $ edges. Hence, Proposition \ref{pro4}, together with
(\ref{in3}), implies that
\[
\left\vert \left\Vert G\right\Vert _{\ast}-\left\Vert G_{0}\right\Vert _{\ast
}\right\vert \leq\sqrt{2s\left(  G_{0}\right)  n}\leq\sqrt{2\left(
n-t\right)  kn}.
\]
Therefore, in view of (\ref{in4}), we find that
\begin{align*}
\left\vert \left\Vert G\right\Vert _{\ast}-\left\Vert H\right\Vert _{\ast
}\right\vert  &  \leq\left\vert \left\Vert G_{0}\right\Vert _{\ast}-\left\Vert
H_{0}\right\Vert \right\vert +\left\vert \left\Vert G\right\Vert _{\ast
}-\left\Vert G_{0}\right\Vert _{\ast}\right\vert \\
&  \leq\sqrt{2\left(  n-t\right)  kn}+\sqrt{\left(  n-t\right)  kn}%
<3\sqrt{\left(  n-t\right)  kn},
\end{align*}
as claimed.
\end{proof}

Armed with Theorem \ref{th3}, we shall encounter no difficulty in proving
Theorem \ref{th2}:

\begin{proof}
[\textbf{Proof of Theorem \ref{th2}}]Recall that for $n$ sufficiently large,
there exists a prime $p$ such that $p\equiv1\operatorname{mod}$ $4$ and $p\leq
n+n^{11/20+\varepsilon}$ (see, e.g., \cite{BHP97}, Theorem 3). Suppose that
$n$ is large enough and fix some prime $p\leq n+n^{3/5}/8$ such that
$p\equiv1$ $\operatorname{mod}$ $4.$ Write $G_{p}$ for the Paley graph of
order $p.$ Recall that $V\left(  G_{p}\right)  :=\left\{  1,\ldots,p\right\}
$ and $\left\{  i,j\right\}  $ is an edge of $G_{p}$ if and only if $i-j$ is a
quadratic residue $\operatorname{mod}$ $p$. Paley graphs are conference
graphs, and their spectra are well known. A simple bound on the energy of
$G_{p}$ gives $\left\Vert G_{p}\right\Vert _{\ast}>p^{3/2}/2$ (see, e.g.,
\cite{Nik07j}).

Let $k:=\left(  p-1\right)  /2,$ and note that $k$ is even and $G_{p}$ is
$k$-regular. Theorem \ref{th3} implies that there exists a $k$-regular graph
$G$ of order $n$ with
\begin{align*}
\left\Vert G\right\Vert _{\ast}  &  >\left\Vert G_{p}\right\Vert _{\ast
}-3\sqrt{\left(  n-p\right)  kn}>\frac{1}{2}p^{3/2}-3\sqrt{n^{8/5}k/8}\\
&  >\frac{1}{2}\left(  n-n^{3/5}/8\right)  ^{3/2}-\frac{3}{4}n^{13/10}.
\end{align*}
On the other hand, using Bernoulli's inequality, we find that
\[
\frac{1}{2}\left(  n-n^{3/5}/8\right)  ^{3/2}-\frac{3}{4}n^{13/10}=\frac{1}%
{2}n^{3/2}\left(  1-n^{-2/5}/8\right)  ^{3/2}>\frac{1}{2}n^{3/2}-\frac{3}%
{32}n^{13/10}.
\]
Hence, $\left\Vert G\right\Vert _{\ast}>n^{3/2}/2-$ $n^{13/10},$ completing
the proof of Theorem \ref{th2}.
\end{proof}

\section{\label{EC}The energy of the complement of a graph}

It seems not widely known that the energy of a graph $G$ and the energy of its
complement $\overline{G}$ are quite close. Indeed, let $G$ be a graph of order
$n,$ let $J_{n\text{ }}$ be the all-ones square matrix of order $n,$ and let
$I_{n\text{\ }}$be the identity matrix of order $n.$ If $A$ and $\overline{A}$
are the adjacency matrices of $G$ and $\overline{G},$ then $A+\overline
{A}=J_{n}-I_{n},$ and using the triangle inequality for the trace norm, we
find that
\[
\left\Vert \overline{A}\right\Vert _{\ast}=\left\Vert J_{n}-I_{n}-A\right\Vert
_{\ast}\leq\left\Vert G\right\Vert _{\ast}+\left\Vert J_{n}-I_{n}\right\Vert
_{\ast}=\left\Vert G\right\Vert _{\ast}+2n-2.
\]
By symmetry, we get the following proposition:

\begin{proposition}
\label{pro1}If $G$ is a graph of order $n$ and $\overline{G}$ is the
complement of $G$, then
\begin{equation}
\left\vert \left\Vert \overline{G}\right\Vert _{\ast}-\left\Vert G\right\Vert
_{\ast}\right\vert \leq2n-2. \label{cin}%
\end{equation}
Equality in (\ref{cin}) holds if and only if $G$ or $\overline{G}$ is a
complete graph.
\end{proposition}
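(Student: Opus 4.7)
The inequality (\ref{cin}) is already derived in the paragraph immediately preceding the proposition, via $A+\overline{A}=J_n-I_n$, the triangle inequality for $\left\Vert \cdot\right\Vert_{\ast}$, and the eigenvalue computation $\left\Vert J_n-I_n\right\Vert_{\ast}=(n-1)+(n-1)=2n-2$. My plan therefore concentrates entirely on characterizing the equality case.

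The tool I would use is the dual formula
$$\left\Vert X\right\Vert_{\ast}=\sup\bigl\{\operatorname{tr}(SX)\,:\,S=S^{T},\ \left\Vert S\right\Vert_{\mathrm{op}}\leq 1\bigr\}$$
for symmetric $X$, together with the standard consequence that equality $\left\Vert X+Y\right\Vert_{\ast}=\left\Vert X\right\Vert_{\ast}+\left\Vert Y\right\Vert_{\ast}$ for symmetric $X,Y$ forces a common admissible $S$ that certifies the supremum for both summands. Assuming, without loss of generality, $\left\Vert \overline{G}\right\Vert_{\ast}-\left\Vert G\right\Vert_{\ast}=2n-2$, equality holds in $\left\Vert (J_n-I_n)+(-A)\right\Vert_{\ast}\leq\left\Vert J_n-I_n\right\Vert_{\ast}+\left\Vert A\right\Vert_{\ast}$, so such a common $S$ exists. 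Since $J_n-I_n$ has no zero eigenvalue, its certifying $S$ is uniquely pinned down as the sign matrix $S_{0}=\tfrac{2}{n}J_n-I_n$, taking value $+1$ on the span of $\mathbf{1}$ and $-1$ on $\mathbf{1}^{\perp}$.

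The remaining requirement $\operatorname{tr}\bigl(S_{0}(-A)\bigr)=\left\Vert A\right\Vert_{\ast}$ is a concrete linear equation. Using $\operatorname{tr}(J_nA)=\mathbf{1}^{T}A\mathbf{1}=2m$ and $\operatorname{tr}(A)=0$, a one-line computation gives $\operatorname{tr}(S_{0}A)=4m/n$, where $m=|E(G)|$. The equation then reads $4m/n=-\left\Vert A\right\Vert_{\ast}$, and since the left side is nonnegative while the right side is nonpositive, both must vanish: $m=0$, so $G$ is edgeless and $\overline{G}=K_n$. Swapping the roles of $G$ and $\overline{G}$ handles the opposite sign and yields $G=K_n$. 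The converse is immediate, since $\left\Vert K_n\right\Vert_{\ast}=2n-2$ while the empty graph has energy $0$, closing the characterization.

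The only delicate ingredient, and the main obstacle in writing a fully self-contained proof, is the equality condition in the triangle inequality for the trace norm on symmetric matrices; this is a standard fact, but I would include a short justification via the dual characterization above in case it is not simply cited. Everything else reduces to the single computation $\operatorname{tr}(S_{0}A)=4m/n$ together with a sign check.
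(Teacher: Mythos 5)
Your proposal is correct, and for the equality characterization it takes a genuinely different route from the paper. The paper does not attack the equality case of (\ref{cin}) directly: it instead proves the sharper bounds $\left\Vert G\right\Vert _{\ast}-\left\Vert \overline{G}\right\Vert _{\ast}\leq2\lambda_{1}(G)$ and $\left\Vert \overline{G}\right\Vert _{\ast}-\left\Vert G\right\Vert _{\ast}\leq2\lambda_{1}(\overline{G})$ (Theorem \ref{pro2}) via Weyl's inequalities applied to $A+\overline{A}=J_{n}-I_{n}$, and then reads off the equality case from the elementary fact that $\lambda_{1}\leq n-1$ with equality only for $K_{n}$. Your argument instead exploits the equality condition in the trace-norm triangle inequality: the dual certificate for $J_{n}-I_{n}$ is unique because that matrix is nonsingular, it must equal $S_{0}=\tfrac{2}{n}J_{n}-I_{n}$, and the residual condition $\operatorname{tr}(S_{0}(-A))=\left\Vert A\right\Vert _{\ast}$ collapses to $-4m/n=\left\Vert A\right\Vert _{\ast}$, forcing $m=0$. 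All the steps check out (including the point that $v^{T}Sv=\pm1$ with $\left\Vert S\right\Vert _{\mathrm{op}}\leq1$ forces $Sv=\pm v$, which is what pins down $S_{0}$), and your route is self-contained and arguably more elementary in that it avoids Weyl's inequalities. What it does not deliver is the quantitative refinement $2\lambda_{1}$, which the paper values in its own right --- it states it as a separate theorem and poses an open problem about improving it --- and which gives the equality case ``for free'' once established. Conversely, your duality argument localizes exactly where equality can occur and would adapt to other decompositions $\overline{A}=X+(-A)$ with $X$ nonsingular, so it is a reasonable alternative; just be sure to include the short justification of the common-certificate lemma you mention, since the paper never invokes trace-norm duality.
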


Instead of tackling the characterization for equality in (\ref{cin}) directly,
we shall prove a more elaborate bound, which implies this characterization
right away. Hereafter, we write $\lambda_{1}\left(  G\right)  ,\ldots
,\lambda_{n}\left(  G\right)  $ for the eigenvalues of the adjacency matrix of
$G$ arranged in descending order.

\begin{theorem}
\label{pro2}If $G$ is a graph of order $n$ and $\overline{G}$ is the
complement of $G$, then
\[
\left\Vert G\right\Vert _{\ast}-\left\Vert \overline{G}\right\Vert _{\ast}%
\leq2\lambda_{1}(G)
\]
and
\[
\left\Vert \overline{G}\right\Vert _{\ast}-\left\Vert G\right\Vert _{\ast}%
\leq2\lambda_{1}(\overline{G}).
\]

\end{theorem}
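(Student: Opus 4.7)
By the symmetry $G \leftrightarrow \overline{G}$, it suffices to prove the second inequality, so my target is $\|\overline{G}\|_\ast - \|G\|_\ast \leq 2\lambda_1(\overline{G})$. The plan is to recast the standard identity $A + \overline{A} = J_n - I_n$ as
\[\overline{A} = (-I_n - A) + J_n.\]
Since $J_n = \mathbf{1}\mathbf{1}^T$ is a rank-one positive semi-definite matrix, this displays $\overline{A}$ as a rank-one PSD perturbation of $-I_n - A$, a situation governed sharply by the classical interlacing theorem for such updates.

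I would write $\mu_1 \geq \cdots \geq \mu_n$ for the eigenvalues of $\overline{A}$ and $\lambda_1 \geq \cdots \geq \lambda_n$ for those of $A$; the eigenvalues of $-I_n - A$ in decreasing order are $\nu_i = -1 - \lambda_{n+1-i}$. The rank-one PSD interlacing yields $\mu_1 \geq \nu_1 \geq \mu_2 \geq \nu_2 \geq \cdots \geq \mu_n \geq \nu_n$. I would use only the shifted half, $\mu_i \leq \nu_{i-1} = -1 - \lambda_{n+2-i}$ for every $i \geq 2$: whenever $\mu_i > 0$ and $i \geq 2$, the corresponding $\lambda_{n+2-i}$ lies strictly below $-1$, and hence $\mu_i \leq |\lambda_{n+2-i}| - 1$.

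To finish, I would invoke $\mathrm{tr}(\overline{A}) = 0$ to write $\|\overline{G}\|_\ast = 2\sum_{\mu_i > 0} \mu_i$. Peeling off the Perron term $\mu_1 = \lambda_1(\overline{G})$ (the case $\lambda_1(\overline{G}) = 0$ being trivial) reduces the goal to
\[\sum_{i \geq 2,\,\mu_i > 0} \mu_i \leq \tfrac{1}{2}\|G\|_\ast.\]
The bound from the previous paragraph dominates the left side by $\sum_{i \geq 2,\,\mu_i > 0} |\lambda_{n+2-i}|$; the indices $j = n+2-i$ are distinct and by construction pick out only negative eigenvalues of $A$, so this is at most $\sum_{j : \lambda_j < 0} |\lambda_j| = \tfrac{1}{2}\|G\|_\ast$ (again using $\mathrm{tr}(A) = 0$), closing the estimate. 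The only conceptual obstacle is identifying which half of the interlacing to wield: the easy bound $\mu_i \geq \nu_i$ contributes nothing, while the shifted bound $\mu_i \leq \nu_{i-1}$ exactly encodes the rank-one deficit and converts every positive eigenvalue of $\overline{A}$ beyond the Perron one into a charge against the negative spectrum of $A$.
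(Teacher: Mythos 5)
Your proof is correct and rests on the same key fact as the paper's: the Weyl/rank-one interlacing inequality $\lambda_i(\overline{G})\leq -1-\lambda_{n+2-i}(G)$ coming from the decomposition $A+\overline{A}=J_n-I_n$. The paper closes the argument with the triangle inequality $|a|-|b|\leq|a+b|$ followed by trace-zero telescoping, whereas you use the positive-part identity $\left\Vert \overline{G}\right\Vert _{\ast}=2\sum_{\mu_i>0}\mu_i$ and charge each non-Perron positive eigenvalue of $\overline{A}$ against a distinct negative eigenvalue of $A$; this is only a difference in bookkeeping, not in substance.
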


\begin{proof}
By definition we have
\begin{align*}
\left\Vert \overline{G}\right\Vert _{\ast}  &  =\lambda_{1}(\overline
{G})+|\lambda_{2}(\overline{G})|+\cdots+|\lambda_{n}(\overline{G})|,\\
\left\Vert G\right\Vert _{\ast}  &  =\lambda_{1}\left(  G\right)  +\left\vert
\lambda_{n}\left(  G\right)  \right\vert +\cdots+\left\vert \lambda_{2}\left(
G\right)  \right\vert .
\end{align*}
Thus, the triangle inequality for the absolute value implies that
\begin{align*}
\left\Vert \overline{G}\right\Vert _{\ast}-\lambda_{1}(\overline
{G})-\left\Vert G\right\Vert _{\ast}+\lambda_{1}\left(  G\right)   &
=\sum_{k=2}^{n}|\lambda_{k}(\overline{G})|-\left\vert \lambda_{n-k+2}\left(
G\right)  \right\vert \\
&  \leq\sum_{k=2}^{n}|\lambda_{k}(\overline{G})+\lambda_{n-k+2}\left(
G\right)  |.
\end{align*}
On the other hand, Weyl's inequalities for the eigenvalues of Hermitian
matrices (see, e.g., [5], p. 181) imply that
\[
\lambda_{k}\left(  G\right)  +\lambda_{n-k+2}(\overline{G})\leq\lambda
_{2}\left(  J_{n}-I_{n}\right)  =-1
\]
for any $k\in\left\{  2,\ldots,n\right\}  .$ Thus, we get
\begin{align*}
\sum_{k=2}^{n}|\lambda_{k}(\overline{G})+\lambda_{n-k+2}\left(  G\right)
|\text{ }  &  =-\sum_{k=2}^{n}\lambda_{k}(\overline{G})+\lambda_{n-k+2}\left(
G\right) \\
&  =\lambda_{1}(\overline{G})+\lambda_{1}\left(  G\right)  ,
\end{align*}
and the required inequalities follow.
\end{proof}

It seems that the bounds in Theorem \ref{pro2} can be improved, so we raise
the following problem.

\begin{problem}
Find the best possible upper bounds for $\left\Vert G\right\Vert _{\ast
}-\left\Vert \overline{G}\right\Vert _{\ast}$ for general and for regular graphs.
\end{problem}

\section{\label{ESR}The energy of some strongly regular graphs}

The goal of this section is to exhibit infinite classes of graphs for which
the bound (\ref{KM}) is exact or almost exact. Our first example uses the rich
class of symplectic graphs $Sp\left(  2m,q\right)  $ in the general form given
by Tang and Wan in \cite{TaWa06} (see the references of \cite{TaWa06} for
previous work on symplectic graphs). The graph $Sp\left(  2m,q\right)  $ is a
design graph and therefore forces equality in (\ref{KM}). Its complement
performs just slightly worse as seen in the next statement:

\begin{proposition}
\label{pro3}Let $q$ be a prime power.

(a) For every $n_{0}$ there exists a $k$-regular graph $G$ of order $n>n_{0}$
such that
\[
k=\frac{q-1}{q}n+\frac{1}{q},
\]
and
\[
\left\Vert G\right\Vert _{\ast}=k+\sqrt{k\left(  n-k\right)  \left(
n-1\right)  }.
\]

(b) For every $n_{0},$ there exists a $k$-regular graph $G$ of order $n>n_{0}$
such that
\[
k=\frac{n}{q}-\frac{q+1}{q},
\]
and
\[
\left\Vert G\right\Vert _{\ast}>\sqrt{k\left(  n-k\right)  n}-n+k+1.
\]

\end{proposition}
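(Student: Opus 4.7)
The plan is to take $G = Sp(2m,q)$ (the symplectic graph of~\cite{TaWa06}) for part (a), and $G = \overline{Sp(2m,q)}$ for part (b). Recall that $Sp(2m,q)$ has $n = (q^{2m}-1)/(q-1)$ vertices, is regular of degree $k = q^{2m-1}$, and is a strongly regular design graph (i.e.\ with $\lambda = \mu$). The identity $k = (q-1)n/q + 1/q$ is immediate from the definition of $n$. Part (a) then follows at once from the equality case of~(\ref{KM}), which holds for every design graph.

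For part (b), set $G = \overline{Sp(2m,q)}$. Its degree $k' = n - 1 - q^{2m-1}$ equals $n/q - (q+1)/q$ after clearing denominators; the case $m = 1$ gives $k' = 0$ and the inequality is trivial. For $m \ge 2$, the cleanest route is to apply Theorem~\ref{pro2} with $\overline{G} = Sp(2m,q)$, obtaining
\[
\|G\|_{\ast} \ge \|Sp(2m,q)\|_{\ast} - 2k = \sqrt{k(n-k)(n-1)} - k.
\]
Since $n - k' = k + 1$ and $-n + k' + 1 = -k$, the target inequality reduces to $k(k'+1)(n-1) > k'(k+1)n$, which, using $k + k' = n - 1$, collapses to the clean condition $k^{2} > k'n$. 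Plugging in the exact values, this boils down to $(q^{2} - 3q + 1)\,q^{4m-2} + q^{2m-1}(q^{2} + 1) - q > 0$, which is evident for every $q \ge 3$.

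The main obstacle is the case $q = 2$, where the leading coefficient $q^{2} - 3q + 1$ is negative and Theorem~\ref{pro2} no longer suffices. I would then invoke the full spectrum $\{k,\,q^{m-1},\,-q^{m-1}\}$ of $Sp(2m,q)$ (the latter two eigenvalues having multiplicities $(n-1 \mp q^{m})/2$), which yields the exact value $\|G\|_{\ast} = k' + (n-1)q^{m-1} - q^{m}$ for the complement. The target inequality now reads $(n-1)(1 + q^{m-1}) - q^{m} > \sqrt{k'(k+1)n}$, and squaring both sides (using the part (a) identity $(n-1)q^{2m-2} = k(k'+1)$ to simplify) produces a polynomial inequality in $n$ whose leading term is positive, completing the proof.
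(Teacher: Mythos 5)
Your proposal is correct, and part (b) takes a partly different route from the paper. For part (a) you do exactly what the paper does: identify $Sp(2m,q)$ as a design graph with $n=(q^{2m}-1)/(q-1)$ and $k=q^{2m-1}$ and read off equality in (\ref{KM}) (the paper verifies the energy directly from the spectrum $q^{2m-1},\pm q^{m-1}$, but that is the same computation). For part (b) the paper argues uniformly in $q$: it writes down the full spectrum of the complement ($k$, $q^{m-1}-1$, $-q^{m-1}-1$ with the stated multiplicities), computes the exact energy, and compares with the target via ``two simple calculations.'' You instead split into cases. For $q\geq3$ you bypass the complement's spectrum entirely by invoking Theorem \ref{pro2}; your reduction of the target to $k^{2}>k'n$, i.e., to $(q^{2}-3q+1)q^{4m-2}+(q^{2}+1)q^{2m-1}-q>0$, checks out, and this is a genuinely cleaner argument as well as a nice application of Theorem \ref{pro2} --- though, as you correctly diagnose, the loss of $2\lambda_{1}$ is fatal exactly when $q=2$. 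For $q=2$ you revert to the paper's method; your exact value $\left\Vert G\right\Vert_{\ast}=k'+(n-1)q^{m-1}-q^{m}$ is right (and, incidentally, is what the paper's own computation yields before an arithmetic slip in its final displayed line, where $-q^{m}$ mutates into $-\frac{q+1}{q}(n-k-1)$), and the squared inequality you defer to does hold: with $n=2^{2m}-1$ the difference of squares is $2^{5m}-2^{4m-2}-6\cdot2^{3m}+2^{2m+1}-2^{2m-1}+2^{m+3}+2>0$ for $m\geq2$, while $m=1$ is the trivial empty-complement case you already dispatched. So the only gap is that this last verification is asserted rather than carried out; everything else is sound. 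Your case split buys a conceptually cleaner proof for $q\geq3$ at the price of a separate (routine) computation for $q=2$, whereas the paper's single spectral computation covers all $q$ at once.
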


\begin{proof}
Recall that in \cite{TaWa06} Tang and Wan defined a class of strongly regular
graphs $Sp\left(  2m,q\right)  $ with parameters%
\[
\left(  \frac{q^{2m}-1}{q-1},q^{2m-1},q^{2m-2}\left(  q-1\right)
,q^{2m-2}\left(  q-1\right)  \right)  ,
\]
where $q$ is any prime power and $m$ is any positive integer.

Note that the eigenvalues of $Sq\left(  2m,q\right)  $ are $q^{2m-1},$
$q^{m-1},$ and $-q^{m-1}.$ Therefore, letting $G:=Sq\left(  2m,q\right)  ,$%
\[
n:=\frac{q^{2m}-1}{q-1}\text{ \ \ and \ \ }k:=q^{2m-1},
\]
we see that%
\[
k-\frac{q-1}{q}n=q^{2m-1}-\frac{q-1}{q}\frac{q^{2m}-1}{q-1}=\frac{1}{q},
\]
and
\[
\left\Vert G\right\Vert _{\ast}=q^{2m-1}+\left(  n-1\right)  q^{m-1}%
=k+\sqrt{k\left(  n-k\right)  \left(  n-1\right)  }.
\]
This observation proves (a). To prove (b) take $G$ to be the complement of
$Sp\left(  2m,q\right)  $. Now $G$ is $k$-regular with%
\[
k=n-\frac{q-1}{q}n-\frac{1}{q}-1=\frac{n}{q}-\frac{q+1}{q}.
\]
The eigenvalues of $G$ are:

- $k$ with multiplicity $1$

- $q^{m-1}-1$ with multiplicity $\left(  \left(  n-1\right)  +q^{m}\right)
/2$

-$-q^{m-1}-1$ with multiplicity $\left(  \left(  n-1\right)  -q^{m}\right)
/2$

Hence, the energy of $G$ satisfies
\begin{align*}
\left\Vert G\right\Vert _{\ast}  &  =\frac{\left(  n-1\right)  +q^{m}}%
{2}\left(  q^{m-1}-1\right)  +\frac{\left(  n-1\right)  -q^{m}}{2}\left(
q^{m-1}+1\right)  +n-q^{2m-1}-1\\
&  =\left\Vert Sp\left(  2m,q\right)  \right\Vert _{\ast}-q^{2m-1}%
-q^{m}+n-q^{2m-1}-1\\
&  =\sqrt{\left(  k+1\right)  \left(  n-k-1\right)  \left(  n-1\right)
}+k-\frac{q+1}{q}\left(  n-k-1\right)
\end{align*}

Two simple calculations show that%
\[
\sqrt{\left(  k+1\right)  \left(  n-k-1\right)  n}>\sqrt{k\left(  n-k\right)
n},
\]
and if $m\geq2,$ then
\[
k-\frac{q+1}{q}\left(  n-k-1\right)  >-\left(  n-k-1\right)  .
\]
Hence, for $m\geq2$ we see that
\[
\left\Vert G\right\Vert _{\ast}>\sqrt{k\left(  n-k\right)  n}-n+k+1,
\]
as claimed.
\end{proof}

The main implication of Proposition \ref{pro3} is the fact that the bound
(\ref{KM}) is exact or asymptotically exact for an infinite set of edge
densities of graphs. Among these densities are the numbers $1/5,$ $1/4,$
$1/3,$ $1/2,$ $2/3,$ $3/4,$ $4/5,$ etc. The fist unknown case is $1/6,$ so we
ask the following question:

\begin{question}
Is it true that for every $\varepsilon>0$ and $n_{0}>0,$ there exists a graph
$G$ of order $n>n_{0}$ such that
\[
\left(  \frac{1}{6}-\varepsilon\right)  \binom{n}{2}<e\left(  G\right)
<\left(  \frac{1}{6}+\varepsilon\right)  \binom{n}{2}%
\]
and
\[
\left\Vert G\right\Vert _{\ast}>\left(  \sqrt{5}/6-\varepsilon\right)
n^{3/2}.
\]

\end{question}

We conclude this section with a family of sparse graphs implying equality in
(\ref{KM}). In \cite{ArSz69} Ahrens and Szekeres constructed a family of
strongly regular graphs with parameters%
\[
\left(  q^{2}\left(  q+2\right)  ,q\left(  q+1\right)  ,q,q\right)  ,
\]
where $q$ is a prime power. These graphs give some credibility to Conjecture
\ref{con0}:

\begin{proposition}
For every $n_{0}$ there exists a $k$-regular graph $G$ of order $n>n_{0}$ such
that
\[
n^{2/3}-\frac{1}{3}n^{1/3}<k<n^{2/3},
\]
with
\[
\left\Vert G\right\Vert _{\ast}=k+\sqrt{k\left(  n-k\right)  \left(
n-1\right)  }.
\]

\end{proposition}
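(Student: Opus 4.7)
The plan is to take $G$ itself to be the Ahrens--Szekeres strongly regular graph with parameters $(q^{2}(q+2),\,q(q+1),\,q,\,q)$ for $q$ a prime power, so that $n=q^{2}(q+2)$ and $k=q(q+1)$. Since the last two SRG parameters coincide, $G$ is by definition a design graph, and Koolen and Moulton's characterization of equality in (\ref{KM}) immediately yields
\[
\left\Vert G\right\Vert _{\ast}=k+\sqrt{k\left(n-k\right)\left(n-1\right)}.
\]
The infinitude of primes furnishes arbitrarily large admissible values of $q$, and hence of $n$, so the existence half of the statement is essentially free once the construction is recalled.

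The remaining task is purely numerical: verify that $n^{2/3}-\tfrac{1}{3}n^{1/3}<k<n^{2/3}$ for the particular $n$ and $k$ above. For the upper bound I would argue directly that $k^{3}<n^{2}$: after cancelling $q^{3}$ this reduces to $(q+1)^{3}<q(q+2)^{2}$, which expands to $q^{2}+q-1>0$ and so holds for every $q\geq 1$. The lower bound is more delicate and requires a second-order expansion. Writing $n=q^{3}(1+2/q)$ and using
\[
(1+x)^{2/3}=1+\tfrac{2}{3}x-\tfrac{1}{9}x^{2}+O(x^{3}),
\qquad
(1+x)^{1/3}=1+\tfrac{1}{3}x+O(x^{2}),
\]
with $x=2/q$, I obtain
\[
n^{2/3}=q^{2}+\tfrac{4q}{3}-\tfrac{4}{9}+O(1/q),
\qquad
n^{1/3}=q+\tfrac{2}{3}+O(1/q),
\]
so that
\[
n^{2/3}-\tfrac{1}{3}n^{1/3}=q^{2}+q-\tfrac{2}{3}+O(1/q)<q^{2}+q=k
\]
for all sufficiently large $q$.

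The only real subtlety, and the main (minor) obstacle, is that the constant $\tfrac{1}{3}$ in the lower bound is essentially tight: the leading-order terms of $n^{2/3}$ and $\tfrac{1}{3}n^{1/3}$ cancel against $q^{2}+q$ exactly, and the inequality survives only thanks to the $-\tfrac{2}{3}$ coming from the second-order corrections $-\tfrac{4}{9}$ in $n^{2/3}$ and $-\tfrac{2}{9}$ in $\tfrac{1}{3}n^{1/3}$. A crude first-order estimate would therefore not suffice, but the second-order expansion does, with constant margin to spare. Everything else is bookkeeping.
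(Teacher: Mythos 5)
Your proof is correct and takes essentially the same route as the paper, which states this proposition immediately after introducing the Ahrens--Szekeres strongly regular graphs with parameters $\left(  q^{2}\left(  q+2\right)  ,q\left(  q+1\right)  ,q,q\right)$ and leaves the arithmetic verification implicit; identifying these as design graphs and invoking the equality case of (\ref{KM}) is exactly the intended argument. One small quibble: your closing claim that a first-order estimate would not suffice is mistaken --- concavity gives $n^{2/3}=q^{2}\left(  1+2/q\right)  ^{2/3}<q^{2}+\tfrac{4q}{3}$ and trivially $n^{1/3}>q$, so $n^{2/3}-\tfrac{1}{3}n^{1/3}<q^{2}+\tfrac{4q}{3}-\tfrac{q}{3}=q^{2}+q=k$ for every $q\geq1$, with no asymptotic expansion and no restriction to large $q$.
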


Using the results of Section \ref{DD}, one can deduce the following extension:

\begin{proposition}
For every $\varepsilon>0,$ and sufficiently large $n,$ if $k$ satisfies
\[
\left(  1-\varepsilon\right)  n^{2/3}<k<\left(  1+\varepsilon\right)
n^{2/3},
\]
and $kn$ is even, there exists a $k$-regular graph $G$ of order $n$ with%
\[
\left\Vert G\right\Vert _{\ast}\geq\left(  1-\varepsilon\right)  \sqrt{kn}.
\]

\end{proposition}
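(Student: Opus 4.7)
The idea is to lift the preceding proposition, which only handles the specific Ahrens--Szekeres parameters, to arbitrary $(n,k)$ in the stated range by applying Theorem \ref{th3} and Proposition \ref{pro4}. Given $\varepsilon>0$ and $(n,k)$ from the hypothesis, I would first choose a prime $q$ as large as possible subject to $q^{2}(q+2) \leq n$ and $q(q+1) \leq k$. A prime-gap estimate of the strength used in the proof of Theorem \ref{th2} guarantees, for $n$ sufficiently large, that the resulting parameters $t:=q^{2}(q+2)$ and $k_{0}:=q(q+1)$ satisfy $n-t=O(\varepsilon n)$ and $k-k_{0}=O(\varepsilon k)$ simultaneously. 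Let $H$ be the Ahrens--Szekeres strongly regular graph on $t$ vertices; it is $k_{0}$-regular with
\[
\|H\|_{\ast}=k_{0}+\sqrt{k_{0}(t-k_{0})(t-1)}=\bigl(1-o(1)\bigr)n\sqrt{k},
\]
and since $k_{0}=q(q+1)$ is always even, the parity conditions $nk_{0}$ even and $n(k-k_{0})$ even are automatic.

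The construction then proceeds in three stages. First, Theorem \ref{th3} converts $H$ into a $k_{0}$-regular graph $H'$ of order $n$ with
\[
\bigl|\,\|H'\|_{\ast}-\|H\|_{\ast}\,\bigr| < 3\sqrt{(n-t)\,k_{0}\,n}=O(\sqrt{\varepsilon})\cdot n\sqrt{k}.
\]
Second, I would form $G_{0}$ by adding $n(k-k_{0})/2$ edges of $\overline{H'}$ to $H'$, distributed so that every vertex is incident to $k-k_{0}$ of the new edges up to a $\pm 1$ correction; this is straightforward because $\overline{H'}$ has minimum degree $n-1-k_{0}\gg k-k_{0}$, so sufficiently many edge-disjoint $2$-factors or perfect matchings can be packed into $\overline{H'}$ by Petersen's theorem. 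Then $s(G_{0}) \leq n$ and Proposition \ref{D} gives
\[
\bigl|\,\|G_{0}\|_{\ast}-\|H'\|_{\ast}\,\bigr| \leq \sqrt{n(k-k_{0})\cdot n}=O(\sqrt{\varepsilon})\cdot n\sqrt{k}.
\]
Finally, Proposition \ref{pro4} replaces $G_{0}$ by a $k$-regular graph $G$ of order $n$ with $\bigl|\,\|G\|_{\ast}-\|G_{0}\|_{\ast}\,\bigr| \leq \sqrt{2 s(G_{0})\,n}=O(n)$, which is negligible compared with $n\sqrt{k}\asymp n^{4/3}$.

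Summing the three losses yields $\|G\|_{\ast}\geq (1-C\sqrt{\varepsilon})\,n\sqrt{k}$ for an absolute constant $C$, from which the claimed lower bound follows by relabeling the tolerance (for a prescribed tolerance $\varepsilon$ in the final statement, start from $\varepsilon':=\varepsilon^{2}/C^{2}$ in Steps 1--4). The main obstacle is Step 1: one needs a prime $q$ close to $\min\{\sqrt{k},n^{1/3}\}$ that drives both $t$ and $k_{0}$ to within a factor $1-O(\varepsilon)$ of $n$ and $k$, and this rests on the non-trivial prime-gap estimate already used for Theorem \ref{th2}; the remaining stages are routine applications of the results of Section \ref{DD}.
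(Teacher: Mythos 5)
The paper gives no proof of this proposition (it is merely asserted to follow from the Ahrens--Szekeres graphs and Section \ref{DD}), and your three-stage construction --- pad the Ahrens--Szekeres graph to order $n$ via Theorem \ref{th3}, raise the degree from $k_{0}=q(q+1)$ to $k$ by inserting a near-regular graph from the complement, then regularize via Theorem R and Proposition \ref{pro4} --- is exactly the route the author indicates, and your parity bookkeeping and the individual error estimates for each stage are correct. (Incidentally, read literally the displayed bound $\sqrt{kn}$ is trivial, since $\left\Vert G\right\Vert _{\ast}^{2}\geq\sum\lambda_{i}^{2}=kn$ for any $k$-regular graph; the intended bound, consistent with Conjecture \ref{con0}, is $\sqrt{k}\,n$, which is what you correctly aim at.)

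The genuine gap is in the final accounting. Your total loss is of order $\sqrt{\varepsilon}\cdot n\sqrt{k}$, so you obtain $(1-C\sqrt{\varepsilon})n\sqrt{k}$, and the proposed repair --- rerunning the argument with $\varepsilon'=\varepsilon^{2}/C^{2}$ --- shrinks the hypothesis along with the conclusion: it covers only $k$ within a factor $1\pm\varepsilon'$ of $n^{2/3}$, not the full range $(1-\varepsilon)n^{2/3}<k<(1+\varepsilon)n^{2/3}$ that the statement quantifies over. Moreover the defect is not just bookkeeping. If $k$ sits at the bottom of its range, say $k=(1-\varepsilon)n^{2/3}$, then $q(q+1)\leq k$ forces $t=q^{2}(q+2)\leq(1+o(1))k^{3/2}=(1+o(1))(1-\varepsilon)^{3/2}n$, so by the Koolen--Moulton bound $\left\Vert H\right\Vert _{\ast}\leq(1+o(1))\,t\sqrt{k_{0}}\approx(1-\varepsilon)^{3/2}n\sqrt{k}$, which is already below the target $(1-\varepsilon)n\sqrt{k}$ before any of your three lossy stages is applied; your claim $\left\Vert H\right\Vert _{\ast}=(1-o(1))n\sqrt{k}$ therefore fails uniformly over the stated range of $k$ (and the alternative of starting from $k_{0}>k$ and deleting $2$-factors loses $n\sqrt{k_{0}-k}\asymp\sqrt{\varepsilon}\,n\sqrt{k}$ by Proposition \ref{D}, which again swamps the $O(\varepsilon)\,n\sqrt{k}$ of available slack). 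What your argument honestly proves is the decoupled statement: for every $\varepsilon>0$ there is a $\delta>0$ such that for $(1-\delta)n^{2/3}<k<(1+\delta)n^{2/3}$ with $kn$ even one gets $\left\Vert G\right\Vert _{\ast}\geq(1-\varepsilon)\sqrt{k}\,n$. You should either prove that version, or say explicitly that the single-$\varepsilon$ form requires tools beyond Section \ref{DD}.
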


\section{\label{ERR}The energy of random regular graphs\ }

In \cite{CLL13}, Chen, Li and Lin studied the skew-energy of a random
$k$-regular oriented graph. Somewhat surprisingly, these authors missed the
fact that the similar methods apply to the energy of $k$-regular graphs. Thus,
in this section we fill in this void. In what follows, we use
\textquotedblleft almost any $k$-regular graph\textquotedblright\ as a synonym
of \textquotedblleft randomly chosen $k$-regular graph\textquotedblright.

\begin{theorem}
\label{th4}Let $k\geq2$ be a fixed integer. The energy of almost any
$k$-regular graph $G$ of order $n$ satisfies
\[
\left\Vert G\right\Vert _{\ast}=\frac{n}{\pi}\left(  2k\sqrt{k-1}-k\left(
k-2\right)  \arctan\frac{2\sqrt{k-1}}{k-2}\right)  +o\left(  n\right)  .
\]

\end{theorem}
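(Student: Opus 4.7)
The plan is to deduce Theorem~\ref{th4} from McKay's classical theorem on the limiting empirical spectral distribution (ESD) of a random $k$-regular graph, in exact analogy with the Chen--Li--Lin treatment of the skew-energy in \cite{CLL13}. Write $\mu_G := n^{-1}\sum_{i=1}^{n} \delta_{\lambda_i(G)}$ for the ESD of a $k$-regular graph $G$ of order $n$. McKay's theorem asserts that, for fixed $k\ge 2$, $\mu_G$ converges weakly in probability to the Kesten--McKay measure with density
\[
\rho_k(x) = \frac{k\sqrt{4(k-1) - x^2}}{2\pi(k^2 - x^2)}, \qquad |x|\le 2\sqrt{k-1},
\]
and zero outside that interval. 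The first step is just to record this fact in the form needed: for almost every $k$-regular graph of order $n$, $\mu_G$ converges weakly to $\rho_k(x)\,dx$.

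The second step is to upgrade this weak convergence to convergence of the first absolute moment $\int |x|\,d\mu_G$. Every eigenvalue of any $k$-regular graph lies in the fixed compact interval $[-k,k]$, so the measures $\mu_G$ have uniformly bounded support and the function $x\mapsto |x|$ is continuous and bounded there. Weak convergence on the common compact set $[-k,k]$ therefore yields
\[
\frac{\left\Vert G\right\Vert_{\ast}}{n} \;=\; \int |x|\,d\mu_G(x) \;\longrightarrow\; \int_{-2\sqrt{k-1}}^{2\sqrt{k-1}} |x|\,\rho_k(x)\,dx,
\]
which is exactly the $o(n)$ statement of the theorem, up to the explicit computation of the right-hand side.

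The third step is that explicit evaluation. By symmetry the integral is twice the integral over $[0,2\sqrt{k-1}]$; the chained substitutions $u = x^2$, $v = 4(k-1) - u$, $t = \sqrt{v}$ then reduce it to
\[
\frac{k}{\pi}\int_0^{2\sqrt{k-1}} \frac{t^2}{(k-2)^2 + t^2}\,dt.
\]
Writing $t^2/((k-2)^2 + t^2) = 1 - (k-2)^2/((k-2)^2 + t^2)$ and integrating each piece, the first trivially and the second via the arctangent antiderivative, yields
\[
\frac{1}{\pi}\left(2k\sqrt{k-1} - k(k-2)\arctan\frac{2\sqrt{k-1}}{k-2}\right),
\]
which is precisely the main term of the theorem. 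The degenerate case $k=2$ falls out by continuity, since the coefficient $k(k-2)$ vanishes.

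The main obstacle is the probabilistic input in the first step. McKay's theorem is usually phrased for the configuration/permutation model of random regular multigraphs, or in terms of the \emph{expected} ESD, whereas the theorem above concerns the uniform distribution on simple $k$-regular graphs. For fixed $k$, however, the uniform model is contiguous to the configuration model conditioned on simplicity (Wormald, Janson), so weak convergence in probability of $\mu_G$ transfers between the two models. Once this transfer is granted, the ``almost any $k$-regular graph'' conclusion is immediate, and the rest of the proof is the deterministic spectral/integral calculation above.
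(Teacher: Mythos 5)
Your proposal is correct and follows essentially the same route as the paper: invoke McKay's limiting spectral distribution for random $k$-regular graphs, pass from weak convergence to convergence of $\int\lvert x\rvert\,d\mu_G$ (justified by the uniform bound $\lvert\lambda_i\rvert\le k$), and evaluate the resulting integral to obtain the closed form. You actually supply more detail than the paper does on both the integral computation and the choice of random-graph model, but the argument is the same one.
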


\begin{proof}
Let $G$ be a randomly chosen $k$-regular graph of order $n$. In \cite{McK81}
McKay showed that, as $n$ increases, the empirical spectral distribution of
$G$ converges to the density function%
\[
f\left(  x\right)  =\left\{
\begin{array}
[c]{ll}%
\frac{k\sqrt{4\left(  k-1\right)  -x^{2}}}{2\pi\left(  k^{2}-x^{2}\right)
}, & \text{if }\left\vert x\right\vert \leq2\sqrt{k-1},\text{ }\\
0, & \text{otherwise.}%
\end{array}
\right.
\]
This implies that the energy of $G$ almost surely satisfies
\[
\left\Vert G\right\Vert _{\ast}=n\int\limits_{-2\sqrt{k-1}}^{2\sqrt{k-1}%
}\left\vert x\right\vert \frac{k\sqrt{4\left(  k-1\right)  -x^{2}}}%
{2\pi\left(  k^{2}-x^{2}\right)  }dx+o\left(  n\right)  .
\]
After a change of variable, the indefinite integral can be calculated, and we
find that, almost surely, $\left\Vert G\right\Vert _{\ast}$ satisfies
\[
\left\Vert G\right\Vert _{\ast}=\frac{n}{\pi}\left(  2k\sqrt{k-1}-k\left(
k-2\right)  \arctan\frac{2\sqrt{k-1}}{k-2}\right)  +o\left(  n\right)  ,
\]
which completes the proof.
\end{proof}

Note that the energy of a randomly chosen $k$-regular graph of order $n$ is
almost equal to the skew energy of a randomly chosen $k$-regular oriented
graph of order $n$ (Theorem 4.3 of \cite{CLL13}). This is not very surprising,
as $k$-regular graphs of large order locally are trees, and the skew energy of
oriented trees is equal to the energy of the underlying unoriented tree.

By some involved calculations one can show that%
\[
\frac{8}{3}\sqrt{k}<2k\sqrt{k-1}-k\left(  k-2\right)  \arctan\frac{2\sqrt
{k-1}}{k-2}<\frac{8}{3}\sqrt{k-1}\left(  1+\frac{1}{k}\right)  .
\]
Hence, as $n$ increases, the energy of almost any $k$-regular graph $G$ of
order $n$ satisfies%
\[
\frac{8}{3\pi}\sqrt{k}n<\left\Vert G\right\Vert _{\ast}<\frac{8}{3\pi}%
\sqrt{k-1}\left(  1+\frac{1}{k}\right)  n.
\]

Let us reiterate that in the above discussion $k$ is fixed, and $n$ tends to
$\infty.$ On the other hand, the distribution of the eigenvalues of random
$k$-regular graphs whenever $k\rightarrow\infty$ with $n$ has been found only
recently, by Dumitriu and Pal \cite{DuPa13}, and Tran, Vu, and Wang
\cite{TVW13}. Based on the latter work, we prove the following theorem.

\begin{theorem}
\label{th5}Let $k\rightarrow\infty$ with $n.$ The energy of almost any
$k$-regular graph $G$ of order $n$ satisfies
\[
\left\Vert G\right\Vert _{\ast}=\left(  \frac{8}{3\pi}+o\left(  1\right)
\right)  \sqrt{k\left(  n-k\right)  n}.
\]

\end{theorem}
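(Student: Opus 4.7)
The plan is to mirror the proof of Theorem \ref{th4}, with the Kesten--McKay limit density replaced by its $k\to\infty$ scaling limit, the Wigner semicircle. Let $G$ be a uniformly random $k$-regular graph of order $n$, let $k=\lambda_{1}\geq\lambda_{2}\geq\cdots\geq\lambda_{n}$ be the eigenvalues of its adjacency matrix, and set $\sigma:=\sqrt{k(1-k/n)}$. The main result of Tran, Vu, and Wang \cite{TVW13} states that, in the regime where $k$ and $n-k$ both tend to infinity, the empirical measure of the rescaled non-Perron eigenvalues
\[
\mu_{n}:=\frac{1}{n-1}\sum_{i=2}^{n}\delta_{\lambda_{i}/\sigma}
\]
converges weakly, with probability tending to $1$, to the semicircle density $f_{sc}(x)=\frac{1}{2\pi}\sqrt{4-x^{2}}$ supported on $[-2,2]$; they further furnish a spectral-gap estimate of the form $\max_{i\geq 2}|\lambda_{i}|=O(\sigma)$ almost surely.

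Starting from the split
\[
\left\Vert G\right\Vert _{\ast}=\lambda_{1}+\sum_{i=2}^{n}|\lambda_{i}|=k+(n-1)\sigma\int|x|\,d\mu_{n}(x),
\]
I would argue that the uniform boundedness of the support of $\mu_{n}$ (guaranteed by the spectral-gap estimate) upgrades the weak convergence of $\mu_{n}$ to convergence of first absolute moments:
\[
\int|x|\,d\mu_{n}(x)\;\longrightarrow\;\int_{-2}^{2}|x|f_{sc}(x)\,dx=\frac{8}{3\pi}.
\]
The limit integral is a direct computation: by symmetry and the substitution $u=4-x^{2}$ one has $\frac{1}{\pi}\int_{0}^{2}x\sqrt{4-x^{2}}\,dx=\frac{8}{3\pi}$. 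Using the identity $n\sigma=\sqrt{nk(n-k)}$ and the observation that $k=o(\sqrt{nk(n-k)})$ (which holds whenever $n-k\to\infty$, since then $k/\sqrt{nk(n-k)}=\sqrt{k/(n(n-k))}\to 0$), the claimed asymptotic for $\left\Vert G\right\Vert _{\ast}$ follows.

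The main obstacle is selecting the right statement from \cite{TVW13}: weak convergence by itself does not control the functional $\int|x|\,d\mu_{n}$, which is continuous but not compactly supported on all of $\mathbb{R}$, so the spectral-gap bound on $|\lambda_{i}|$ for $i\geq 2$ is essential to get uniform integrability. A secondary point is the degenerate regime $n-k=O(1)$, which is not covered by \cite{TVW13}; here one passes to the bounded-degree complement via Proposition \ref{pro1} and invokes Theorem \ref{th4}. As a sanity check on the final formula: in the sparse regime $k/n\to 0$ one has $\sqrt{nk(n-k)}=(1+o(1))n\sqrt{k}$, so Theorem \ref{th5} recovers the $k\to\infty$ asymptotic of the bracketed quantity in Theorem \ref{th4}, consistent with the bounds displayed at the end of that section.
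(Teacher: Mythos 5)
Your proposal follows essentially the same route as the paper's own proof: split off the Perron eigenvalue $\lambda_{1}=k$, rescale the remaining spectrum by $\sigma=\sqrt{k(1-k/n)}$, invoke the Tran--Vu--Wang semicircle law, and compute $\int_{-2}^{2}|x|\,\tfrac{1}{2\pi}\sqrt{4-x^{2}}\,dx=8/(3\pi)$. You are in fact more careful than the paper on the one nontrivial analytic point --- upgrading weak convergence of the empirical spectral distribution to convergence of the first absolute moment via a uniform bound on $\max_{i\geq2}|\lambda_{i}|/\sigma$, which the paper passes over silently --- and on the degenerate regime $n-k=O(1)$.
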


\begin{proof}
Let $G_{n,k}$ be a randomly chosen $k$-regular graph of order $n,$ let $A_{n}$
be its adjacency matrix, and let
\[
M_{n}:=\frac{1}{\sqrt{\frac{k}{n}\left(  1-\frac{k}{n}\right)  }}\left(
A-\frac{k}{n}J_{n}\right)
\]
A recent result of Tran, Vu, and Wang \cite{TVW13} states that if $k$
$\rightarrow\infty$ with $n,$ then the empirical spectral distribution of the
matrix $n^{-1/2}M_{n},$ converges to the standard semicircle distribution,
i.e., to the density function%
\[
f\left(  x\right)  =\left\{
\begin{array}
[c]{ll}%
\frac{1}{2\pi}\sqrt{4-x^{2}}, & \text{if }\left\vert x\right\vert \leq2,\text{
}\\
0, & \text{otherwise.}%
\end{array}
\right.
\]
First, let us note that
\[
\left\Vert A\right\Vert _{\ast}=\sqrt{\frac{k}{n}\left(  1-\frac{k}{n}\right)
}\left\Vert M\right\Vert _{\ast}+k.
\]
For $\left\Vert M\right\Vert _{\ast}$ we get
\begin{align*}
\left\Vert M\right\Vert _{\ast} &  =\frac{n^{3/2}}{2\pi}\int\limits_{-2}%
^{2}\left\vert x\right\vert \sqrt{4-x^{2}}+o\left(  n^{3/2}\right)
=\frac{n^{3/2}}{\pi}\int\limits_{0}^{2}x\sqrt{4-x^{2}}+o\left(  n^{3/2}%
\right)  \\
&  =\frac{8}{3\pi}n^{3/2}+o\left(  n^{3/2}\right)  .
\end{align*}
Hence,
\[
\left\Vert A\right\Vert _{\ast}=\left(  \frac{8}{3\pi}+o\left(  1\right)
\right)  \sqrt{k\left(  n-k\right)  n},
\]
as claimed.
\end{proof}

Comparing Theorem \ref{th4} with \ref{th1} and Theorem \ref{th5} with bound
(\ref{KM}), we see that the energy of almost all $k$-regular graphs is more
than $84\%$ of the maximum one. This high value adds some extra credibility to
Conjecture \ref{mcon}.\bigskip


\begin{thebibliography}{99}                                                                                               %


\bibitem {ArSz69}R.W. Ahrens and G. Szekeres, On a combinatorial
generalization of 27 lines associated with a cubic surface, \emph{J. Austral.
Math. Soc. }\textbf{10} (1969), 485-492.

\bibitem {BHP97}R. C. Baker, G. Harman, J. Pintz, The exceptional set for
Goldbach's problem in short intervals, \emph{Sieve methods, exponential sums,
and their applications in number theory }(Cardiff, 1995),\ pp. 1--54, LMS
Lecture Notes Ser., 237, Cambridge Univ. Press, Cambridge, 1997.

\bibitem {Bal04}R. Balakrishnan, The energy of a graph, \emph{Linear Algebra
Appl.} \textbf{387} (2004), 287--295.

\bibitem {BJK99}T. Beth, D. Jungnickel, and H. Lenz, Design Theory, vol. 1,
2nd ed., \emph{Cambridge, Cambridge University Press}, Cambridge,1999.

\bibitem {CLL13}X. Chen, X. Li, and H. Lin, The skew energy of random oriented
graphs, \emph{Linear Algebra Appl. }\textbf{438 }(2013), 4547--4556.

\bibitem {DHK14}E.R. van Dam, W.H. Haemers, and J.H Koolen, Regular graphs
with maximal energy per vertex, \emph{J. Combin. Theory Ser B }\textbf{107
}(2014), 123--131.

\bibitem {DuPa13}I. Dumitriu and S. Pal, Sparse regular random graphs:
Spectral density and eigenvectors, \emph{Ann. Probab.} \textbf{40} (2012), 2197--2235.

\bibitem {Gut78}I. Gutman, The energy of a graph, \emph{Ber. Math.-Stat. Sekt.
Forschungszent. Graz} \textbf{103} (1978), 1--22.

\bibitem {GLS12}I. Gutman, X. Li, and Y. Shi, \emph{Graph Energy}, New York,
Springer, 2012, 266 pp.

\bibitem {GFPR07}I. Gutman, S.Z. Firoozabadi, J.A. de la Pe\~{n}a. and J.
Rada, On the energy of regular graphs, \emph{MATCH} \textbf{57} (2007), 435--442.

\bibitem {HoJo85}R. Horn and C. Johnson, Matrix Analysis, \emph{Cambridge
University Press}, Cambridge, 1985. xiii+561pp.

\bibitem {KoMo01}J.H. Koolen and V. Moulton, Maximal energy graphs, \emph{Adv.
Appl. Math.}\textbf{ 26 }(2001), 47--52.

\bibitem {LLS10}X. Li , Y. Li, Y. Shi, Note on the energy of regular graphs,
\emph{Linear Algebra Appl. }\textbf{432} (2010), 1144--1146

\bibitem {McK81}B.D. McKay, The expected eigenvalue distribution of a large
regular graph, \emph{Linear Algebra Appl.} \textbf{40} (1981), 203--216Ann. Prob

\bibitem {Nik06}V. Nikiforov, Eigenvalues and degree deviation in graphs,
\emph{Linear Algebra Appl.} \textbf{414} (2006), 347-360.

\bibitem {Nik07i}V. Nikiforov, The energy of graphs and matrices, \emph{J.
Math. Anal. Appl.} \textbf{326} (2007), 1472--1475.

\bibitem {Nik07j}V. Nikiforov, Graphs and matrices with maximal energy,
\emph{J. Math. Anal. Appl. }\textbf{327} (2007), 735--738.

\bibitem {TaWa06}Z. Tang and Z.-X. Wan, Symplectic graphs and their
automorphisms, \ \emph{Europ. J. Combin. }\textbf{ 27 }(2006) 38--50.

\bibitem {TVW13}L.V. Tran, V.H. Vu, and K. Wang, Sparse random graph:
eigenvalues and eigenvectors, \emph{Random Struct. Alg.} \textbf{42 }(2013), 110--134.
\end{thebibliography}
\end{document}